\newtheorem{thm}{Theorem}[section]
\newtheorem{prop}[thm]{Proposition}
\newtheorem{lem}[thm]{Lemma}
\newtheorem{cor}[thm]{Corollary}
\newtheorem{assn}[thm]{Assumption}
\newcommand{\Eint}{E^{\operatorname{int}}}
\newcommand{\hEnint}{\hat E_n^{\operatorname{int}}}
\newcommand{\id}{\operatorname{id}}
\newcommand{\sign}{\operatorname{sign}}
\newcommand{\supp}{\operatorname{supp}}
\newcommand{\weakto}{ \rightharpoonup }
\newcommand{\xto}[1]{\xrightarrow{#1}}
\newcommand{\C}{\mathbb C}
\newcommand{\e}{\varepsilon}
\newcommand{\N}{\mathbb N}
\newcommand{\R}{\mathbb R}
\newcommand{\ba}{\mathbf a}
\newcommand{\bb}{\mathbf b}
\newcommand{\bx}{\mathbf x}
\newcommand{\cM}{\mathcal M}
\newcommand{\cP}{\mathcal P}
\newcommand{\ov}[1]{\overline{#1}}
\newcommand{\oba}{\overline \ba}
\newcommand{\obx}{\overline \bx}
\newcommand{\omu}{\overline \mu}
\newcommand{\oR}{\overline R}
\newcommand{\orho}{\overline \rho}
\newcommand{\ox}{\overline x}
\DeclareFontFamily{U}{mathx}{\hyphenchar\font45}
\DeclareFontShape{U}{mathx}{m}{n}{
      <5> <6> <7> <8> <9> <10>
      <10.95> <12> <14.4> <17.28> <20.74> <24.88>
      mathx10
      }{}
\DeclareSymbolFont{mathx}{U}{mathx}{m}{n}
\DeclareMathAccent{\widecheck}{0}{mathx}{"71}
\def\showComm{0} 
\long\def\comm#1{{\if\showComm1 {\color{magenta} #1 } \fi}}
\title{Convergence rates for energies of interacting particles whose distribution spreads out as their number increases}
\author{Patrick van Meurs and Ken'ichiro Tanaka}
\begin{document}

\maketitle

\begin{abstract}
We consider a class of particle systems which appear in various applications such as approximation theory, plasticity, potential theory and space-filling designs. The positions of the particles on the real line are described as a global minimum of an interaction energy, which consists of a nonlocal, repulsive interaction part and a confining part. Motivated by the applications, we cover non-standard scenarios in which the confining potential weakens as the number of particles increases. This results in a large area over which the particles spread out. Our aim is to approximate the particle interaction energy by a corresponding continuum interacting energy. Our main results are bounds on the corresponding energy difference and on the difference between the related potential values. We demonstrate that these bounds are useful to problems in approximation theory and plasticity. The proof of these bounds relies on convexity assumptions on the interaction and confining potentials. It combines recent advances in the literature with a new upper bound on the minimizer of the continuum interaction energy.
\end{abstract}

\noindent
\textbf{Keywords}: interacting particle systems, calculus of variations, asymptotic analysis.

\noindent
\textbf{MSC}: {
74G10, 
49J45, 
26A51. 
}


\noindent
\hrulefill

\tableofcontents

%
%
%



\noindent
\hrulefill

\section{Introduction}
\label{s:intro}

This paper studies the behaviour of particle systems in which the unknowns are the particle positions $\ba := (a_1, \ldots, a_n) \in \R^n$ on the real line with $a_1 < \ldots < a_n$. The starting point is the minimization of the particle interaction energy given by 
\begin{equation} \label{In}
  I_n^D (\ba) 
  := \frac n{n-1} \sum_{i =1}^n \sum_{ \substack{ j = 1 \\ j \neq i }}^n K (a_i - a_j) + 2 \beta \sum_{i=1}^n Q ( a_i ), 
\end{equation}
where the superscript `$D$' stands for `discrete' (i.e., a finite number of particles),
\begin{equation}
  K(x) := -\log| \tanh x |
  \label{eq:def_K}
\end{equation}
is the repulsive particle interaction potential, $Q$ is a confining potential, and $\beta = \beta_n > 0$ is a given parameter which controls the strength of the confinement. Figure \ref{fig:V} illustrates $K$ and a typical $Q$. 
Assumption \ref{a:Q} lists our assumptions on $Q$.

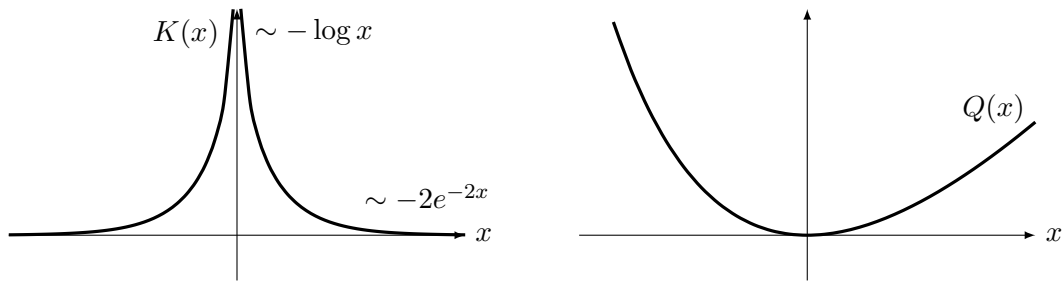
\begin{figure}[h]
\centering
\begin{tikzpicture}[scale=1.5, >= latex]    
\def \w {2}
\def \v {3}

\begin{scope}[scale=.667]        
\draw[->] (0,-.6) -- (0,\v);
\draw[->] (-\v,0) -- (\v,0) node[right] {$x$};
\draw[domain=0.05:\v, smooth, very thick] plot (\x,{-ln(tanh \x)});
\draw[domain=-\v:-0.05, smooth, very thick] plot (\x,{{-ln(-tanh \x)}});
\draw (.1, \v) node[anchor = north west]{$\sim - \log x$};
\draw (2.5, .5) node {$\sim - 2 e^{-2x}$};
\draw (-.1, \v) node[anchor = north east]{$K(x)$};
\end{scope} 

\begin{scope}[shift={(2.5*\w,0)},scale=1]
\draw[->] (0,-.4) -- (0,\w);
\draw[->] (-\w,0) -- (\w,0) node[right] {$x$};
\draw[domain=-1.7:\w, smooth, very thick] plot (\x,{ 6*(4/(\x + 4) + \x/4 -1) });
\draw (\w, .9) node[anchor = south east]{$Q(x)$};
\end{scope} 
\end{tikzpicture} \\
\caption{Sketches of $K$ and $Q$.}
\label{fig:V}
\end{figure} 

\begin{assn} \label{a:Q}
The confinement potential $Q$ satisfies:
\begin{enumerate}[label=(\roman*)]
  \item (Regularity) $Q \in C^4(\R)$,
  \item \label{a:Q:cv} (Convexity) $Q$ is convex on $\R$,
  \item (Growth) $\displaystyle \liminf_{x \to \pm \infty} Q(x) > \inf_\R Q$.
\end{enumerate} 
\end{assn}

Energies of the type \eqref{In} appear in various applications such as function approximation theory \cite{TanakaSugihara19,HayakawaTanaka19}, plasticity of metals \cite{GeersPeerlingsPeletierScardia13,
GarroniVanMeursPeletierScardia16,
VanMeurs21DBLArXiv}, potential theory \cite{SandierSerfaty15} and space-filling designs \cite{PronzatoZhigljavsky20}.
In this paper we focus on the first two. 
First, for the application to approximation theory in \cite{TanakaSugihara19,HayakawaTanaka19}, 
the authors consider the energy~\eqref{In} with $\beta = 1$
for minimizing the worst case error of approximation formulas given by
\begin{align}
E_{n}^{\mathrm{min}} := \inf \left[
\sup_{\| f \| \leq 1} \sup_{x \in \R}
\left|
f(x) - \sum_{j=1}^{l} \sum_{k=0}^{m_{j}-1} f^{(k)}(a_{j}) \, \phi_{jk}(x)
\right|
\right],    
\label{eq:WCE}
\end{align}
where the infimum is taken over all possible approximation formulas given by the sampling points $a_{j}$ and the functions $\phi_{jk}$, and the first of the two suprema is taken over a certain function space with a weight on $\R$. 
Since the value $E_{n}^{\mathrm{min}}$ is given by a certain potential function of 
the sampling points $a_{j}$, 
their (nearly) optimal locations can be determined by 
minimizing the energy~\eqref{In} with $Q$ defined by the weight%
\footnote{In \cite{TanakaSugihara19,HayakawaTanaka19}, 
the authors consider the interaction potential
$K(y) = -\log| \tanh (C y) |$
with a parameter $C > 0$. 
In this paper, 
we rescale $x = Cy$ to remove this parameter without loss of generality.
}.  
Second, in the application to plasticity of metals, $a_i$ are the locations of microscopic defects in the material. 
In this case, an interaction potential different from $K$ is used, but most of the properties are conserved 
(such as evenness, the logarithmic singularity, the exponentially decaying tails, and the convexity on $(0,\infty)$).
Furthermore, $\beta > 0$ regulates the strength of the external force, which may depend on $n$ without any a priori upper or lower bound in terms of $n$. 

One of the focal points in this literature -- in particular in the two applications we focus on -- is to construct sharp estimates on the difference between $I_n^D$ evaluated at its minimal point $\oba$ (or the value of the corresponding potential at $\oba$; see \eqref{FD:FC}) and its continuum counterpart. The continuum energy corresponding to $I_n^D$ obtained in \cite{TanakaSugihara19} is
\begin{align} \label{InC}
  I_n^C : n \cP(\R) \to \R \cup \{\infty\}, \qquad  
  I_n^C (\mu) 
  &:= \int_\R \int_\R K(x-y) \, d \mu (y) d \mu (x) + 2 \beta \int_\R Q(x) \, d \mu(x), 
\end{align}
where $\cP(\R)$ is the space of probability measures on $\R$, and $n \cP(\R) = \{ n \mu \mid \mu \in \cP(\R) \}$. From the viewpoint of the application to plasticity -- in particular in \cite{KimuraVanMeurs21,VanMeurs21DBLArXiv} -- it is desired to construct sufficiently sharp lower and upper bounds on 
\begin{equation} \label{ene:diff} 
  I_n^D(\overline{\mathbf{a}}) - I_n^C (\overline{\mu}), 
\end{equation}
where $\omu$ is the minimizer of $I_n^C$. Such bounds were recently obtained in \cite{KimuraVanMeurs21}, but only for the specific scaling $\beta \sim n$. In this paper we extend these bounds to a wide scaling regime for $\beta$ (see Theorem \ref{t:intro}), which in particular covers the setting in approximation theory where $\beta = 1$. 

From the viewpoint of  the application to approximation theory, it is desired to find sharp lower and upper bounds on 
\begin{equation} \label{pot:diff} 
  F_n^D - F_n^C
\end{equation}
rather than bounds on \eqref{ene:diff}. Here,  
\begin{equation} \label{FD:FC}
  F_n^D := I_n^D(\overline{\mathbf{a}}) - \beta \sum_{i=1}^n Q(\overline{a}_i),
  \qquad
  F_n^C := I_n^C (\overline{\mu}) - \beta \int_\R Q(x) \, d\overline{\mu}(x)
\end{equation}
are the potential values corresponding to the energies $I_n^D$ and $I_n^C$ at their minimizers. The values $F_n^D$ and $F_n^C$ are commonly called the Robin constants; see e.g.\ \cite{SaffTotik97}. These constants provide upper and lower bounds on the error value defined in \eqref{eq:WCE}. Indeed, \cite[Theorem 3.5]{TanakaSugihara19} states that
\begin{equation} \label{cEn:bds} 
  - \frac{ F_n^C }{n-1}
  \leq \log E_{n}^{\mathrm{min}}
  \leq - \frac{ F_n^D }n.
\end{equation}
Recently, in \cite[Theorem 2.3]{HayakawaTanaka19}, upper and lower bounds on the difference of the Robin constants
\eqref{pot:diff} were derived: 
\begin{equation} \label{HT19:estimate:I} 
   - \frac{n+1}{n-1} F_n^D - (3 + \log 2) \frac{n^2}{n-1}
   \leq F_n^D - F_n^C \leq 0.
\end{equation}
While the upper bound is satisfactory, the lower bound is suboptimal; the least one expects is $|F_n^D - F_n^C| \ll F_n^D$ as $n \to \infty$.  
In this paper, 
our second goal is to improve the lower bound in \eqref{HT19:estimate:I}; see Theorem \ref{t:intro}.
In addition, Theorem \ref{t:intro} extends the class of potentials $Q$ under which \eqref{HT19:estimate:I} was derived.

The rest of this paper is organized as follows. 
In Section~\ref{s:main} we present our main theorems: 
Theorems~\ref{t:intro} and~\ref{t}. 
Theorem \ref{t} is a rescaled version of Theorem \ref{t:intro} which fits more naturally to the application to plasticity. 
We also state simplified versions of these theorems (see Corollaries~\ref{c:t} and~\ref{c:t:intro}) and discuss applications thereof. 
In Section~\ref{s:pf:t} we prove Theorem~\ref{t}. This proof is the main mathematical contribution of this paper.
In Section~\ref{s:pf:t:intro} we show that Theorem~\ref{t:intro} follows from Theorem~\ref{t}.

\section{Main results}
\label{s:main} 

\subsection{Preliminaries}
\label{s:main:prel}

Let $K$ be given by \eqref{eq:def_K} and 
$Q$ be a potential satisfying Assumption~\ref{a:Q}.
Several useful consequences of Assumption \ref{a:Q} are that $Q$ is bounded from below, that the minimum of $Q$ over $\R$ 
is attained, and that $Q$ has at least linear growth, i.e.\ there exists $c, C > 0$ such that
\begin{equation} \label{Q:lin:growth}
  Q(x) \geq c|x| - C
  \qquad \text{for all } x \in \R.
\end{equation}
The growth of $Q$ in \eqref{Q:lin:growth} guarantees the existence of minimizers $\oba$ and $\omu$ of $I_n^D$ and $I_n^C$ respectively. Since we don't assume strict convexity of $Q$, $\oba$ may not be unique, but $\omu$ is \cite[Theorem 1.5]{KimuraVanMeurs20DOI}. We fix $\oba$ as one of the minimizers. \comm{Example of nonuniqueness of minimizers: $n = 2$, $Q(x) = |x|$ (can be regularized around $0$ to meet the $C^2$ requirement). The obvious minimizer looks like $a_2 = -a_1 = c$ for some $c > 0$. But, for any $|x| \leq c$, $b_1 = a_1 + x$ and $b_2 = a_2 + x$ satisfies $I_n^D(\bb) = I_n^D(\ba)$.}

To state our main result in a concise manner, we make further assumptions on $Q$ without loss of generality. Since the interaction terms in $I_n^D$ and $I_n^C$ are invariant to spatial translation of the particles, we may assume that $Q(x)$ is minimal at $x=0$. Moreover, adding a constant $C$ to $Q$ is invariant to either $I_n^D - I_n^C$ and $F_n^D - F_n^C$, and thus we may assume $Q(0) = 0$. In conclusion, we may assume without loss of generality that
\begin{equation} \label{Q:min0}
  \min_\R Q = Q(0) = 0.
\end{equation} 
For later use, we set
\begin{equation} \label{J:q1:q2}
  [q_1, q_2] := \{ x \in \R \mid Q(x) = 0 \}
\end{equation}
as the set of all minimizers of $Q$. Here, we allow for the generic case $q_1 = 0 = q_2$, for which we define $[q_1, q_2] := \{0\}$.

The last additional assumption which we make on $Q$ without loss of generality is more technical. Let 
\[
  P(x) := \int_0^x Q(y) \, dy \qquad \text{for all } x \in \R
\]
be a primitive of $Q$, and let $P^{-1} : \R \setminus \{0\} \to \R$ be the inverse of $P |_{\R \setminus [q_1, q_2] }$. After fixing $n \geq 1$ and $\beta = \beta_n > 0$ we assume that
\begin{equation} \label{Pinv:ass}  
  P^{-1} \Big( \frac n\beta \Big) 
  \geq - P^{-1} \Big( -\frac n\beta \Big).
\end{equation}
If \eqref{Pinv:ass} would not hold, then by the change of spatial variable from $x$ to $-x$ the resulting energies are given by \eqref{In} and \eqref{InC} with $Q(x)$ replaced by $Q(-x)$, for which \eqref{Pinv:ass} holds. In the remainder, we will always assume that \eqref{Q:min0} and \eqref{Pinv:ass} hold in addition to Assumption \ref{a:Q} unless mentioned otherwise. 

Since $P$ will play an important role in the statements of Theorems \ref{t:intro} and \ref{t}, we list several of its properties: 
\begin{itemize}
  \item $P$ is strictly increasing on $\R \setminus (q_1, q_2)$ and $P |_{[q_1, q_2]} \equiv 0$,
  \item $P$ is concave on $(-\infty,0]$ and convex on $[0,\infty)$,
  \item there exist $c,C > 0$ such that $|P(x)| \geq c x^2 - C$ for all $x \in \R$,
  \item $P^{-1}$ is increasing, 
  \item $P^{-1}$ is convex on $(-\infty,0)$ and concave on $(0,\infty)$,
  \item $P^{-1}(y) \to q_1 \leq 0$ as $y \nearrow 0$ and
        $P^{-1}(y) \to q_2 \geq 0$ as $y \searrow 0$,
  \item $\displaystyle \lim_{|y| \to \infty} |P^{-1}(y)| = \infty \quad$ and $\quad \displaystyle  \limsup_{|y| \to \infty} \frac{|P^{-1}(y)|}{\sqrt{|y|}} < \infty$.
\end{itemize}

Finally, we provide two typical examples of $Q$ and $P$. The first is $Q(x) = |x|^p$ for $p \geq 1$. It satisfies Assumption \ref{a:Q} (except for $1 \leq p < 2$, in which case the regularity requirement fails, but that is irrelevant in this example) and
\begin{equation} \label{P:example}
  P(x) = \frac{\sign (x)}{p+1} |x|^{p+1},
\qquad P^{-1}(y) = \sign(y) \big((p+1) |y|\big)^{\tfrac1{p+1}}.
\end{equation}
The second example demonstrates the situation where $q_1 < q_2$. Let $Q(x) = [|x|-1]_+$. It satisfies Assumption \ref{a:Q} (except, again, for the regularity requirement) and
\[
  P(x) = \begin{cases}
    -(x+1)^2
    &\text{if } x < -1 \\
    0
    &\text{if } -1 \leq x \leq 1 \\
    (x-1)^2
    &\text{if } x > 1, 
  \end{cases}
  \qquad 
  P^{-1}(y) = \begin{cases}
    - \sqrt{|y|} - 1
    &\text{if } y < 0 \\
    \sqrt y + 1
    &\text{if } y > 0.
  \end{cases}
\]

\subsection{Main Theorems}
\label{s:main:thms}

With the preliminaries in Section \ref{s:main:prel} we are ready to state the first of our two main theorems on upper and lower bounds of \eqref{ene:diff} and \eqref{pot:diff}:

\begin{thm} \label{t:intro}
Let $Q$ satisfy Assumption \ref{a:Q}, \eqref{Q:min0} and \eqref{Pinv:ass}, and let $q_2$ be as in \eqref{J:q1:q2}. Then for all $\Gamma > 0$ there exist $C, C' > 0$ such that for all $n \in \N$ with $n > \max \{q_2, 1\}$ and all $\frac n{P(n)} \leq \beta \leq \Gamma n$ there holds $\supp \omu \subset [-C \alpha, C \alpha]$, \comm{[$n=1$ is not allowed because of the factor $\frac1{n-1}$ in $I_n^D$]}
\[
  - \sqrt{ \frac{n^2}\alpha B_n^\beta }
   \leq F_n^D - F_n^C  
   \leq 0
\]
and 
\[
  - B_n^\beta
   \leq I_n^D(\oba) - I_n^C (\omu) 
   \leq B_n^\beta,
\] 
where
\[
  B_n^\beta := C' \frac{n^2}\alpha \min \Big\{ \frac \alpha n \log \Big( \frac n\alpha \Big( 1 + \frac{\beta \alpha^3}n \| Q'' \|_{L^\infty(\supp \omu)} \Big) \Big), 1 \Big\},
  \qquad \alpha := P^{-1} \Big( \frac n\beta \Big).
\]
\end{thm}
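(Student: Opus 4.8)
The statement packages three claims: (i) the support bound $\supp\omu\subset[-C\alpha,C\alpha]$; (ii) the two–sided energy estimate $|I_n^D(\oba)-I_n^C(\omu)|\le B_n^\beta$; (iii) the Robin–constant estimate. I would establish them in this order, since (ii) needs (i) both to localise and to make sense of $\|Q''\|_{L^\infty(\supp\omu)}$, and (iii) is deduced from (ii) by a quantitative–convexity (stability) argument. One may also first rescale, as the authors apparently do in Theorem~\ref{t}, so that the relevant length and energy scales ($\alpha$ and $n^2/\alpha$) become $O(1)$; this is cosmetic, and I describe the argument in the original variables.

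\textbf{Support bound.} Use the Euler--Lagrange characterisation of the continuum minimiser $\omu$: there is a constant $c_n$ with $2(K*\omu)(x)+2\beta Q(x)=c_n$ throughout $\supp\omu$ and $\ge c_n$ off it (the potential $K*\omu$ is continuous). Integrating against $\omu/n$ gives $c_n=\tfrac2n F_n^C$, and since $K\ge0$ we get $0\le c_n\le\tfrac2n I_n^C(\omu)$. Testing minimality of $\omu$ against the explicit competitor $\mu_0:=\tfrac{n}{\alpha+\alpha'}\mathbf 1_{[-\alpha',\alpha]}\,dx$ with $\alpha':=-P^{-1}(-n/\beta)\in[0,\alpha]$ (here \eqref{Pinv:ass} is used), and using $\int_{-\alpha'}^{\alpha}Q=P(\alpha)+|P(-\alpha')|=2n/\beta$ together with $\iint K\,d\mu_0^2\le\tfrac{n^2}{\alpha+\alpha'}\|K\|_{L^1}$, yields $I_n^C(\omu)\le I_n^C(\mu_0)\le Cn^2/\alpha$, hence $c_n\le Cn/\alpha$. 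If now $R_+:=\sup\supp\omu>\alpha$, the EL equality at $R_+$ and $K*\omu\ge0$ give $\beta Q(R_+)\le c_n/2\le Cn/(2\alpha)$; on the other hand convexity of $Q$ with $Q(0)=0$ gives $Q(R_+)\ge(R_+/\alpha)Q(\alpha)$ and $\beta Q(\alpha)\ge\beta P(\alpha)/\alpha=n/\alpha$, so $R_+\le C\alpha$. The left edge is symmetric, using $\alpha'\le\alpha$.

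\textbf{Energy estimate.} For the upper bound, plug the quantile configuration $a_i^*$ (say $\omu((-\infty,a_i^*])=i-\tfrac12$) into $I_n^D$ and compare termwise with $I_n^C(\omu)$: the confining parts differ by a midpoint–quadrature error $\lesssim\beta\|Q''\|_{L^\infty(\supp\omu)}\sum_i(\text{local gap})^2\lesssim\beta\alpha^3 n^{-1}\|Q''\|$, while convexity of $K$ on $(0,\infty)$ renders the off–diagonal part of the interaction comparison one–signed, so that only the near–diagonal sum $\sum_{|i-j|=O(1)}K(a_i^*-a_j^*)\lesssim n\log(n/\alpha)$ (local gap $\sim\alpha/n$) contributes. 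For the lower bound one regularises $\oba$ by replacing each $\delta_{\oba_i}$ with a uniform slab of width $\sim$ the local gap, producing $\mu^{\oba}\in n\cP(\R)$ with $I_n^C(\mu^{\oba})\le I_n^D(\oba)+B_n^\beta$; the error is of the same two types (Taylor in $Q$; convexity of $K$ and $|K''(x)|\sim x^{-2}$ near the diagonal). One also verifies en route that $\supp\oba\subset[-C\alpha,C\alpha]$ via the discrete Euler--Lagrange relations — that the edge particle cannot sit much beyond $\alpha$ is, I expect, the most delicate point. Combining the two inequalities, capping the error by $n^2/\alpha$ via the crude bound $I_n^D(\oba),I_n^C(\omu)\lesssim n^2/\alpha$, and simplifying the logarithm gives $B_n^\beta$. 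This bookkeeping, valid for confinement as weak as $\beta\sim n/P(n)$, is where most of the work lies and where the recent results of \cite{KimuraVanMeurs21} enter.

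\textbf{Robin constants.} I would take the upper bound $F_n^D-F_n^C\le0$ from \cite{HayakawaTanaka19}. For the lower bound, write $F_n^D-F_n^C=(I_n^D(\oba)-I_n^C(\omu))-\beta\big(\sum_iQ(\oba_i)-\int Q\,d\omu\big)$; the first summand is $\ge-B_n^\beta$ by (ii), so it suffices to bound $\beta\big(\sum_iQ(\oba_i)-\int Q\,d\omu\big)$ above by $C\sqrt{(n^2/\alpha)B_n^\beta}$. The only nonlinearity of $I_n^C$ is the quadratic form $\|\cdot\|_K^2:=\iint K\,d(\cdot)^2$, which is a norm squared on finite signed measures because $\widehat K>0$ (explicitly $K(x)=2\sum_{j\ge0}\tfrac1{2j+1}e^{-2(2j+1)|x|}$, so $\widehat K$ is positive and of order $|\xi|^{-1}$); hence the identity $I_n^C(\mu)=I_n^C(\omu)+\langle DI_n^C(\omu),\mu-\omu\rangle+\|\mu-\omu\|_K^2$ together with the EL inequality $\langle DI_n^C(\omu),\mu-\omu\rangle\ge0$ on $n\cP(\R)$ gives $I_n^C(\mu)-I_n^C(\omu)\ge\|\mu-\omu\|_K^2$. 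Applying this to $\mu=\mu^{\oba}$ and using $I_n^C(\mu^{\oba})\le I_n^D(\oba)+B_n^\beta\le I_n^C(\omu)+2B_n^\beta$ yields $\|\mu^{\oba}-\omu\|_K^2\le2B_n^\beta$. Since $K*\,\cdot$ is elliptic of order $-1$, a cut-off of $Q$ (legitimate because $\mu^{\oba}$ and $\omu$ are supported in $[-C\alpha,C\alpha]$) is of the form $K*\sigma$, and the $L^\infty$-bound $\beta Q\le c_n/2\lesssim n/\alpha$ on $\supp\omu$ forces $\|\sigma\|_K\lesssim n/(\beta\sqrt\alpha)$; therefore $\beta\big|\int Q\,d(\mu^{\oba}-\omu)\big|=\beta|\langle\sigma,\mu^{\oba}-\omu\rangle_K|\le\beta\|\sigma\|_K\|\mu^{\oba}-\omu\|_K\lesssim\sqrt{(n^2/\alpha)B_n^\beta}$, as needed. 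The square root is the usual loss in passing from an energy bound to the location of a minimiser, and the prefactor $n^2/\alpha\sim\beta^2\|\sigma\|_K^2$ records the size of the functional $\mu\mapsto\beta\int Q\,d\mu$ measured dually to $\|\cdot\|_K$.
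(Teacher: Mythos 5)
The proposal correctly identifies the overall architecture (support bound, then two-sided energy comparison, then Robin constants via a quantitative-convexity argument), and your expansion $I_n^C(\mu)-I_n^C(\omu)=\|\mu-\omu\|_K^2+\langle DI_n^C(\omu),\mu-\omu\rangle$ with the Euler--Lagrange inequality is essentially the mechanism the paper uses in Section~\ref{s:pf:t:F:LB}, only phrased dually. But there is a genuine gap at the centre of the energy estimate, and a related misreading of where the $\|Q''\|$-dependence comes from.

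You assert that the quantile gaps are $\ell_i\sim\alpha/n$ and that the confinement contributes a ``midpoint-quadrature error $\lesssim\beta\|Q''\|\sum_i\ell_i^2\lesssim\beta\alpha^3n^{-1}\|Q''\|$'', while the near-diagonal interaction contributes $\lesssim n\log(n/\alpha)$. Both statements presuppose a uniform-in-$\alpha$ upper bound on the density of $\omu$ (equivalently, a lower bound on $\min_i\ell_i$), and this is exactly what is \emph{not} available for free here: as $\beta$ weakens and $\alpha\to\infty$, the known regularity results from~\cite{KimuraVanMeurs20DOI} give $\|\orho\|_\infty\le C_\alpha$ with an uncontrolled, implicitly $\alpha$-dependent constant, and one can cook up confinements (e.g.\ $Q(x)=e^{|x|^p}-1$) for which $\|\orho\|_\infty\to\infty$ as $\alpha\to\infty$. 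The paper's main new contribution, Proposition~\ref{prop:orhoa:sup:bd}, is precisely the quantitative bound
\[
  \|\orho\|_\infty \le C\bigl(\|Q_\alpha''\|_{L^\infty(\supp\orho)}+1\bigr),
\]
proved by taking $(K_\alpha*\orho)''=-Q_\alpha''$ on $\supp\orho$, representing the left side via Lemma~\ref{l:convo:pp}, and squeezing a tangent parabola under $\orho$ at a cleverly chosen interior point $x_*$. Your proposal does not contain this step, nor any substitute for it, so the gap bound $\ell_i\gtrsim 1/(n\|\orho\|_\infty)$ — and hence the whole energy estimate for $\alpha$ unbounded — is unjustified.

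Moreover, the role you assign to $\|Q''\|$ is not the one it plays in the theorem. In the paper the confinement quadrature error is bounded simply by $C/n$ with no $Q''$ at all (see \eqref{pf:zq}, which exploits monotonicity of $Q$ on either side of $0$, not a second-order Taylor expansion). The factor $1+\tfrac{\beta\alpha^3}{n}\|Q''\|_{L^\infty(\supp\omu)}$ — which rescales to $1+\|Q_\alpha''\|_{L^\infty(\supp\orho)}$ — appears \emph{inside the logarithm} because it bounds $\|\orho\|_\infty$, hence $1/(\alpha\ell_i)$, in the near-diagonal interaction sum; it is an interaction-term effect, not a confinement quadrature error. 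So even if you patched in a density bound, the estimate $\beta\alpha^3 n^{-1}\|Q''\|$ would appear in the wrong place (multiplicatively rather than inside a log), and you would not recover the stated form of $B_n^\beta$.

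A smaller remark: your lower bound on the energy difference (replacing each $\delta_{\oba_i}$ by a uniform slab) is a legitimate alternative in principle, but the paper avoids it by using the piecewise-affine regularisation $K=L+M$ and positivity of the $L$-bilinear form (Lemma~\ref{l:KLM}), which handles the near-diagonal singularity without needing to control the discrete minimiser's gaps at all; only $\|\orho\|_\infty$ is needed. Your route would also require a discrete analogue of the density bound (a lower bound on $\oba_{i+1}-\oba_i$), which is an additional difficulty the paper sidesteps.
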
 

Out of the four bounds, the upper bound on $F_n^D - F_n^C$ is not new:

\begin{thm}[$\sim${\cite[Theorems 3.4 and 3.5]{TanakaSugihara19}}] \label{t:TS19} 
Under the conditions in Theorem \ref{t:intro}, there holds
\[
  F_n^D - F_n^C  
   \leq 0.
\]
\end{thm}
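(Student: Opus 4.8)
The plan is to follow the argument of \cite[Theorems~3.4 and~3.5]{TanakaSugihara19}, which I expect to go through under the weaker Assumption~\ref{a:Q} together with \eqref{Q:min0}; I sketch it in three steps.

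\emph{Step 1 (a probabilistic comparison of the two energies).} Since $\omu$ minimises $I_n^C$ over $n\cP(\R)$ we have $\int_\R\int_\R K\,d\omu\,d\omu<\infty$, and in particular the probability measure $\nu:=\frac1n\omu$ is non-atomic (an atom would force $I_n^C(\omu)=\infty$ because $K(0)=\infty$). I would draw $b_1,\dots,b_n$ i.i.d.\ from $\nu$ and reorder them increasingly; collisions have probability zero. Because $K\ge 0$, Tonelli's theorem gives
\[
  \mathbb{E}\big[I_n^D(b_1,\dots,b_n)\big]
  = \frac n{n-1}\sum_{i\ne j}\int_\R\int_\R K(x-y)\,d\nu(x)\,d\nu(y) + 2\beta\sum_{i=1}^n\int_\R Q\,d\nu
  = I_n^C(\omu).
\]
Minimality of $\oba$ then yields $I_n^D(\oba)\le I_n^C(\omu)$; subtracting $\beta\sum_i Q(\overline{a}_i)$ and recalling \eqref{FD:FC},
\[
  F_n^D - F_n^C \;\le\; \beta\Big(\int_\R Q\,d\omu - \sum_{i=1}^n Q(\overline{a}_i)\Big).
\]

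\emph{Step 2 (extracting slack).} Step~1 does not yet close the proof, since the particles may carry strictly less $Q$-mass than $\omu$ --- indeed $\omu$ is the more spread out of the two configurations, as it must pay a logarithmic self-energy that $I_n^D$ does not. To recover the missing slack I would refine the sampling: partition $\R$ into consecutive intervals $I_1<\dots<I_n$ with $\omu(I_k)=1$ and take $b_k$ distributed as $\omu|_{I_k}$. This leaves the confinement expectation $\mathbb{E}\big[\sum_i Q(b_i)\big]=\int_\R Q\,d\omu$ unchanged but removes the within-cell interaction terms, so that
\[
  I_n^D(\oba) \;\le\; I_n^C(\omu) - \frac n{n-1}\Big(\sum_{k=1}^n\int_{I_k}\int_{I_k}K\,d\omu\,d\omu - \frac1n\int_\R\int_\R K\,d\omu\,d\omu\Big),
\]
and hence
\[
  F_n^D - F_n^C \;\le\; \beta\Big(\int_\R Q\,d\omu - \sum_{i=1}^n Q(\overline{a}_i)\Big) - \frac n{n-1}\Big(\sum_{k=1}^n\int_{I_k}\int_{I_k}K\,d\omu\,d\omu - \frac1n\int_\R\int_\R K\,d\omu\,d\omu\Big).
\]

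\emph{Step 3 (closing the estimate --- the main obstacle).} It remains to show the right-hand side above is $\le 0$. This is where the structure of the problem genuinely enters: the Euler--Lagrange equations for $\oba$ and for $\omu$, combined with the convexity of $Q$ and of $K|_{(0,\infty)}$, should yield a discrete--continuum comparison principle locating each $\overline{a}_i$ relative to the cell $I_i$ and thereby bounding $\sum_i Q(\overline{a}_i)$ from below, while the logarithmic singularity of $K$ at the origin forces the within-cell energy $\sum_k\int_{I_k}\int_{I_k}K\,d\omu\,d\omu$ to be large enough to absorb the residual $Q$-discrepancy. I expect that balancing these two estimates with the correct constants --- uniformly over the wide window $\frac n{P(n)}\le\beta\le\Gamma n$ --- is the crux of the proof; the remaining manipulations should be routine bookkeeping, and several of them overlap with what is needed for the quantitative bounds of Theorem~\ref{t:intro}.
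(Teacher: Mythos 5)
Your Steps~1 and~2 are computationally correct and reproduce (with a cell-by-cell refinement) the standard probabilistic averaging argument for comparing discrete and continuum \emph{energies}; this is essentially the route the paper itself uses later (Section~\ref{s:pf:t:E:UB}) for the upper bound on $E_n^\alpha(\obx) - E^\alpha(\orho)$. But Step~3 is where Theorem~\ref{t:TS19} actually lives, and it is not carried out: you describe what would need to happen --- that the intra-cell interaction slack dominates the confinement discrepancy --- without giving any argument. Moreover, as you yourself observe, $\sum_i Q(\overline{a}_i) < \int Q\,d\omu$ is the generic situation (the continuum measure spreads out more because it pays a self-interaction the discrete energy does not), so the first term of your final upper bound is typically \emph{positive}, and the only thing standing between you and the conclusion is precisely the uncontrolled comparison of two quantities of similar size. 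Nothing in the sampling framework gives you a handle on that comparison, and it is not the kind of thing that follows from convexity or routine Euler--Lagrange manipulations; your phrase ``I expect'' and ``should yield'' is an accurate flag that no proof is present.

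The paper does not close this gap via your route at all. It cites \cite[Theorems~3.4 and~3.5]{TanakaSugihara19}, whose proofs are potential-theoretic: they exploit that $K(x) = -\log|\tanh x|$ is the restriction to $\R$ of a function harmonic on a strip in $\C$, and use the Frostman conditions for $\omu$ together with maximum-principle arguments for the associated complex potential to compare the discrete and continuous Robin constants directly --- not by bounding $I_n^D(\oba) - I_n^C(\omu)$ and then trying to peel off the $Q$-terms. Your argument uses only $K \ge 0$, non-atomicity of $\omu$, and the cell structure of $\supp\omu$, and never touches the complex-analytic structure of $K$; this is exactly why it falls short, since $F_n^D \le F_n^C$ is a genuinely potential-theoretic statement and does not follow from the coarse energy inequality $I_n^D(\oba) \le I_n^C(\omu)$ plus bookkeeping. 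If you want to prove Theorem~\ref{t:TS19} rather than defer to the literature, you should follow the harmonic-extension/maximum-principle argument of \cite{TanakaSugihara19} and verify, as the paper notes, that it does not in fact use analyticity or strict convexity of $Q$, nor $\beta = 1$.
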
 

Theorem \ref{t:TS19} is not a \emph{direct} citation of \cite[Theorems 3.4 and 3.5]{TanakaSugihara19}; we give further details. First, for the reader's convenience we note that \cite{TanakaSugihara19} uses the potentials 
\[
  F_{K,Q}^{\rm C}(n) := F_n^C
  \quad \text{and}
  \quad F_{K,Q}^{\rm D}(n) := \frac{n-1}n F_n^D.  
\]
\cite[Theorem 3.5]{TanakaSugihara19} states that $F_{K,Q}^{\rm D}(n) - F_{K,Q}^{\rm C}(n) \leq 0$. In addition, from the statement of \cite[Theorem 3.4]{TanakaSugihara19} and the proof of \cite[Theorem 3.5]{TanakaSugihara19} it is easy to see that the stronger estimate $\frac n{n-1} F_{K,Q}^{\rm D}(n) - F_{K,Q}^{\rm C}(n) \leq 0$ also holds. This estimate is equivalent to that in Theorem \ref{t:TS19} above. Second, in the setting of \cite{TanakaSugihara19}, $\beta = 1$ and $Q$ is analytic and strictly convex. Nevertheless, the proofs of \cite[Theorems 3.4 and 3.5]{TanakaSugihara19} apply without modification to the (weaker) assumptions on $\beta$ and $Q$ used in the present paper. Indeed, the proofs rely on potential theory and the fact that $K$ is the restriction to $\R$ of an harmonic map on a strip in $\C$. \comm{These assumptions are used elsewhere; namely analyticity is used to derive the integral form of the error of the approximation formula (i.e.\ $\| f_n - f \|$ in terms of a contour integral in the complex plane), and strong convexity is used to get uniqueness of the minimizer of $I^{\rm D}$.}
\smallskip

Before discussing the other three bounds in Theorem \ref{t:intro} and their proofs, we reformulate Theorem \ref{t:intro} into a form (see Theorem \ref{t} below) in which several quantities scale as $1$ as $n \to \infty$. Indeed, in the current setting, the minimizer $\omu$ spreads out over a length scale $\alpha$ as $n$ gets large, and it is unclear how the energy values $I_n^D(\oba)$ and $I_n^C (\omu)$ (and therefore also the potential values $F_n^D$ and $F_n^C$) scale as $n$ gets large. In the following we rescale these quantities to order $1$ and motivate the expression for $\alpha$.

We motivate the expression for $\alpha$ in a formal manner. We want to redefine $\alpha = \alpha (n, \beta)$ as the length scale over which the measure $\omu$ spreads out as $n \to \infty$. Then, to any measure $\mu \in n \cP(\R)$ we associate the measure
\[
  \rho \in \cP(\R), \qquad 
  \rho = \frac1n \Big( \frac1\alpha \id \Big)_\# \mu,
\]
where `$\#$' in the subscript denotes the push-forward. 
By construction, $\rho$ has mass $1$. Moreover, if the mass of $\mu$ spreads out over a length scale of order $\alpha$, then the mass of $\rho$ spreads out over a length scale of order $1$. Using this rescaling of measures, we introduce the rescaled continuum energy 
\begin{equation} \label{E:def}   
  E^\alpha : \cP(\R) \to \R \cup \{\infty\}, \qquad 
  E^\alpha(\rho) := \frac1\gamma I_n^C(n (\alpha \id)_\# \rho),
\end{equation}
where we want the parameter $\gamma = \gamma(n, \beta) > 0$ to be proportional to $I_n^C(\omu)$ as $n \to \infty$. Then, since $\omu$ is the unique minimizer of $I_n^C$, it follows that
\[
  \orho := \frac1n \Big( \frac1\alpha \id \Big)_\# \omu
\]
is the unique minimizer of $E^\alpha$. Note that $\orho$ spreads out over a length scale of order $1$ and that $E^\alpha(\orho)$ is of order $1$.

Next we derive explicit expressions for $\alpha$ and $\gamma$ with the aforementioned properties. Since we do not have an explicit expression for $\omu$, this is not straightforward. Instead, we apply the method performed in \cite{ScardiaPeerlingsPeletierGeers14}; we choose a certain $\rho \in \cP(\R)$ for which we expect $E^\alpha(\rho)$ to be of the same order as $E^\alpha(\orho)$, and for which we can compute explicitly the interaction and confinement part $E^\alpha(\rho)$. Then, we take $\gamma$ and $\alpha$ such that both the interaction and confinement part are of order $1$. A posteriori we will verify that then also for $E^\alpha(\orho)$ the interaction and confinement parts are of order $1$. 

We take $\rho$ to be absolutely continuous with density $\chi_{[0,1]}$, where
\begin{equation} \label{chi} 
  \chi_A(x) := \left\{ \begin{array}{ll}
    1
    & \text{if } x \in A \\
    0
    &\text{otherwise}
  \end{array} \right.
\end{equation} 
is the indicator function of some $A \subset \R$. From \eqref{E:def} we obtain that the interaction part of $E^\alpha(\rho)$ is
\begin{equation} \label{pf:zk}
  \frac{n^2}{\gamma} \int_\R \int_\R K( \alpha[x-y]) \, d\rho (y) d\rho (x) 
  = \frac{n^2}{\gamma \alpha} \int_0^1 \int_0^1 \alpha K( \alpha[x-y]) \, dydx.
\end{equation}
The function
\begin{equation} \label{Kal}
    K_\alpha(x) := \alpha K (\alpha x)
\end{equation}
in the integrand satisfies $\int_\R K_\alpha(x) \, dx = \int_\R K(x) \, dx$, which is independent of $\alpha$. Furthermore, the graph of $K_\alpha$ gets squeezed to the $y$-axis in the sense that $K_\alpha \weakto (\int_\R K(x) \, dx) \delta_0$ as $\alpha \to \infty$ in the weak topology of measures (i.e.\ tested against continuous and bounded functions). Hence, for $\alpha$ bounded away from $0$, the value in \eqref{pf:zk} scales as $n^2 / (\gamma \alpha)$. To make it of order $1$, we set $\gamma = 2 n^2 / \alpha$. 

The confinement part of $E^\alpha(\rho)$ is
\begin{equation*}
  \frac{ 2 \beta n }{\gamma} \int_\R Q(\alpha x) \, d\rho(x)
  = \frac{\beta \alpha}{n} \int_0^\alpha \frac1\alpha Q(y) \, dy
  = \frac{\beta}{n} P(\alpha).
\end{equation*}
For this to be of order $1$, we take $\alpha = P^{-1}(n/\beta) > q_2$. 

In conclusion, we take
\begin{equation} \label{alpha} 
  \alpha = P^{-1} \Big( \frac n\beta \Big),
  \qquad \gamma = 2 \frac{n^2}\alpha.
\end{equation} 
Substituting this into \eqref{E:def}, we obtain 
\begin{align} \label{Ealpha}
  E^\alpha (\rho) 
  &= \frac12 \int_\R \int_\R K_\alpha(x-y) \, d\rho (y) d\rho (x) + \int_\R Q_\alpha(x) \, d\rho(x),
\end{align}
where 
\begin{equation*} 
    Q_\alpha (x) := \frac{\alpha}{P(\alpha)} Q ( \alpha x ).
\end{equation*}
The scaling in $\alpha$ is such that
\begin{equation} \label{Qa:normzn} 
  \int_0^1 Q_\alpha(x) \, dx
  = \frac1{P(\alpha)} \int_0^\alpha Q ( z ) \, dz
  = 1
  \qquad \text{for all } \alpha > q_2,
\end{equation}
and such that for the $p$-homogeneous $Q(x) = |x|^p$ with $p \geq 1$ we have (recall \eqref{P:example}) $Q_\alpha(x) = (p+1) |x|^p$, which is independent of $\alpha$. 

The expression in \eqref{Ealpha} shows that, as the notation suggests, $E^\alpha$ depends on $n$ only through $\alpha$. Therefore, also $\orho$ depends on $n$ only through $\alpha$. Moreover,
we show in Lemmas \ref{l:Ea:bds} and \ref{l:orhoa:supp} that, as desired, the mass of $\orho$ spreads out over a length scale of order $1$ and 
\begin{equation} \label{Ea:ULB}
  \frac1C \leq E^\alpha(\orho) \leq C
\end{equation}
for some constant $C > 0$ independent of $\alpha$.

In our reformulation of Theorem \ref{t:intro} we use $\alpha$ instead of $\beta$. By inverting \eqref{alpha} we recover
\begin{equation*} 
  \beta = \frac n{P(\alpha)}.
\end{equation*}
Moreover, \eqref{Pinv:ass} reads as
\begin{equation} \label{Pinv:ass:al}  
  \alpha 
  \geq - P^{-1} ( -P(\alpha) )
  \qquad \text{for all } \alpha > q_2.
\end{equation}

Next we apply the scaling \eqref{E:def},\eqref{alpha} to $I_n^D$. This yields
\begin{equation} \label{Ena:scaling}
  E_n^\alpha (\bx) 
  := \frac1\gamma I_n^D(\alpha \bx)
  = \frac1{2 n (n-1)} \sum_{i =1}^n \sum_{ \substack{ j = 1 \\ j \neq i }}^n K_\alpha (x_i - x_j) + \frac1n \sum_{i=1}^n Q_\alpha (  x_i ),
\end{equation}
where we consider $E_n^\alpha $ as a function defined on the domain
\[
  \Omega_n := \{ \bx = (x_1, \ldots, x_n) \in \R^n \mid x_1 < \ldots < x_n \}.
\]
Finally, we define similarly to \eqref{FD:FC} the potential values
\begin{equation} \label{Fan:Fa}
  F_n^\alpha := E_n^\alpha(\obx) - \frac1{2n} \sum_{i=1}^n Q_\alpha(\ox_i),
  \qquad
  F^\alpha := E^\alpha (\orho) - \frac12 \int_\R Q_\alpha(x) \, d\orho(x),
\end{equation}
where $\obx := \oba / \alpha$ is a minimizer of $E_n^\alpha$.

\begin{thm}  \label{t}
Let $Q$ satisfy Assumption \ref{a:Q}, \eqref{Q:min0} and \eqref{Pinv:ass:al}, and let $q_2$ be as in \eqref{J:q1:q2}. Then for any $c > 0$ there exist $C, C' > 0$ such that for all $n \geq 2$ and all $\max \{c, q_2 \} < \alpha \leq n$ there holds $\supp \orho \subset [-C, C]$,
\[
  -\sqrt{ A_n^\alpha }
   \leq F_n^\alpha - F^\alpha 
   \leq 0
\]
and 
\[
  - A_n^\alpha
   \leq E_n^\alpha(\obx) - E^\alpha (\orho) 
   \leq A_n^\alpha,
\] 
where
\[
  A_n^\alpha := C' \min \Big\{ \frac \alpha n \log \Big( \frac n\alpha \big( 1 + \| Q_\alpha'' \|_{L^\infty(\supp \orho)} \big) \Big), 1 \Big\}.
\] 
\end{thm}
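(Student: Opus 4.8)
\emph{Proof plan.} The plan is to compare the discrete minimiser $\obx$ with the continuum minimiser $\orho$ by passing between the two representations — discretising $\orho$ for the upper bounds, smearing $\obx$ into a probability measure for the lower bounds — while tracking the $\alpha$-dependence that was inert in the regime $\alpha\asymp1$ treated in \cite{KimuraVanMeurs21}. Throughout I use the Euler--Lagrange characterisations: since $E^\alpha$ is convex with unique minimiser $\orho$, there is a constant $\lambda=\lambda(\alpha)$ such that $U(x):=\int_\R K_\alpha(x-y)\,d\orho(y)+Q_\alpha(x)$ equals $\lambda$ on $\supp\orho$ and is $\geq\lambda$ on $\R$, and $\obx$ solves $\tfrac1{n-1}\sum_{j\neq i}K_\alpha'(\ox_i-\ox_j)+Q_\alpha'(\ox_i)=0$ for every $i$. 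Two preliminary facts carry the argument. First, $Q_\alpha$ is convex with $Q_\alpha(0)=0$ and, by \eqref{Qa:normzn} together with \eqref{Pinv:ass:al}, satisfies $\int_0^1 Q_\alpha=1$ and $\int_{-1}^0 Q_\alpha\geq1$; elementary convexity then gives the uniform coercivity $Q_\alpha(x)\geq2|x|-2$, and combining this with $K_\alpha\geq0$, $U\equiv\lambda$ on $\supp\orho$, and $\lambda\leq2E^\alpha(\orho)\leq C$ (by \eqref{Ea:ULB}) yields the claimed bound $\supp\orho\subset[-C,C]$; testing $E_n^\alpha$ at an equispaced configuration in $[0,1]$ shows $E_n^\alpha(\obx)\leq C$, and then the same reasoning confines the $\ox_i$ to a fixed compact set. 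Second, differentiating $U\equiv\lambda$ and inverting the Hilbert-transform-like operator $\rho\mapsto\int K_\alpha'(\cdot-y)\,d\rho(y)$ produces two-sided density bounds $\tfrac1C\leq\orho\leq C\big(1+\|Q_\alpha''\|_{L^\infty(\supp\orho)}\big)$ on $\supp\orho$; this is the source of the factor inside the logarithm in $A_n^\alpha$.

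\emph{Energy difference.} For the upper bound I evaluate $E_n^\alpha$ at the quantile configuration $\bx^\circ$ defined by $\orho\big((-\infty,\ox_i^\circ)\big)=\tfrac{i}{n+1}$. The confinement part lies within $O(\tfrac1n)$ of $\int Q_\alpha\,d\orho$ by convexity of $Q_\alpha$, the density bounds, and the relation $U\equiv\lambda$ on $\supp\orho$; in the interaction part I separate the pairs with $|i-j|$ bounded from the rest, the ``far'' pairs forming a Riemann sum for $\int_\R\int_\R K_\alpha\,d\orho\,d\orho$ accurate to $O(\tfrac1n)$ (monotonicity of $K_\alpha$ on each half-line plus the density bounds), while each ``near'' pair contributes at most $K_\alpha$ at the minimal spacing $\gtrsim\big(n(1+\|Q_\alpha''\|)\big)^{-1}$, i.e.\ at most $\alpha\log\!\big(\tfrac n\alpha(1+\|Q_\alpha''\|)\big)$; weighted by $\tfrac1{n(n-1)}$, the $O(n)$ near pairs sum to $\lesssim\tfrac\alpha n\log\!\big(\tfrac n\alpha(1+\|Q_\alpha''\|)\big)$. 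For the lower bound I smear each $\ox_i$ into a symmetric bump of width $\eta_i$ comparable to, and at most half of, the local spacing, obtaining $\tilde\rho\in\cP(\R)$ with $E^\alpha(\orho)\leq E^\alpha(\tilde\rho)$; on expansion $E^\alpha(\tilde\rho)$ equals $E_n^\alpha(\obx)$ plus the bump self-energies (absent from $E_n^\alpha$), which sum to $\lesssim\tfrac\alpha n\log\tfrac n\alpha$, plus off-diagonal Taylor errors $\lesssim\eta_i^2\|K_\alpha''\|$ summing to $O(\tfrac\alpha n)$ once the spacing lower bound for $\obx$ coming from the logarithmic repulsion is used, plus a confinement error $O(\tfrac1n)$. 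Together these give $|E_n^\alpha(\obx)-E^\alpha(\orho)|\leq A_n^\alpha$, the alternative value $1$ being automatic since $E_n^\alpha,E^\alpha\asymp1$. The $\alpha$-insensitive parts of this computation follow \cite{KimuraVanMeurs21}; only the $\alpha$-dependent estimates need to be re-derived.

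\emph{Potential difference and main obstacle.} The upper bound $F_n^\alpha-F^\alpha\leq0$ is Theorem~\ref{t:TS19} after the rescaling \eqref{E:def}--\eqref{alpha}, under which $F_n^D-F_n^C$ becomes a positive multiple of $F_n^\alpha-F^\alpha$. For the lower bound, the Euler--Lagrange relations and the positive-definiteness of $K_\alpha$ — equivalently, the convexity of $E^\alpha$ on $\cP(\R)$ — give $E^\alpha(\rho)-E^\alpha(\orho)\geq\tfrac12\int_\R\int_\R K_\alpha(x-y)\,d(\rho-\orho)(x)\,d(\rho-\orho)(y)=:\tfrac12\|\rho-\orho\|_{K_\alpha}^2$ for all $\rho\in\cP(\R)$; taking $\rho=\tilde\rho$ and using the previous step, $\|\tilde\rho-\orho\|_{K_\alpha}^2\lesssim A_n^\alpha$. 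Writing $F_n^\alpha-F^\alpha$ through $\tilde\rho-\orho$ — a linear term which, after removing the mean via $U\equiv\lambda$ on $\supp\orho$, is $O\big(\|\tilde\rho-\orho\|_{K_\alpha}\big)$, a quadratic term equal to $\tfrac12\|\tilde\rho-\orho\|_{K_\alpha}^2$, and discretisation errors already shown to be $O(A_n^\alpha)$ — yields $F_n^\alpha-F^\alpha\geq-C\|\tilde\rho-\orho\|_{K_\alpha}-CA_n^\alpha\geq-\sqrt{A_n^\alpha}$ after enlarging $C'$; the square root is intrinsic, since the potential difference is first order in $\tilde\rho-\orho$ whereas the energy gap only controls the square of its $K_\alpha$-seminorm. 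The real difficulty throughout is to make every estimate uniform in $\alpha$ over the range $\max\{c,q_2\}<\alpha\leq n$ — above all the two-sided density bounds for $\orho$ with explicit $\|Q_\alpha''\|$-dependence, the spacing lower bound for $\obx$, and the bookkeeping of the scale $\tfrac\alpha n$ at which the singularity of $K_\alpha$ is felt; once these uniform a priori estimates are in hand, the discrete–continuum comparison is the smearing/quantisation scheme of \cite{KimuraVanMeurs21} carried through with the $\alpha$-dependence.
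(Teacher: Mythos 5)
Your plan reproduces the overall shape of the argument (discrete--continuum comparison, Euler--Lagrange relations, reducing the potential bound to the energy bound by Cauchy--Schwarz in an interaction seminorm), but two of its load-bearing claims fail. The first is the asserted ``two-sided density bounds $\tfrac1C\leq\orho\leq C\big(1+\|Q_\alpha''\|_{L^\infty(\supp\orho)}\big)$ on $\supp\orho$'' obtained by ``differentiating $U\equiv\lambda$ and inverting the Hilbert-transform-like operator.'' The lower bound is simply false: the paper records (from \cite{KimuraVanMeurs20DOI}) that $\orho(x)\leq C_\alpha\sqrt{(x-y_1^\alpha)(y_2^\alpha-x)}$ with $\orho$ H\"older continuous, so the density vanishes at the endpoints of its support. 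More importantly, the upper bound with explicit $\|Q_\alpha''\|$-dependence is exactly the hard step of the paper --- Proposition~\ref{prop:orhoa:sup:bd}, which the authors flag as their main mathematical contribution --- and the route you sketch is the one the paper argues \emph{cannot} be made uniform in $\alpha$: inverting the Hilbert-type operator treats the logarithmic singularity of $K$ as the dominant part and the rest as a perturbation, but as $\alpha\to\infty$ the exponentially decaying tails of $K_\alpha$ (which have nonzero integral) become dominant, and the ``perturbation'' constant blows up. The paper circumvents this with a geometric argument: squeeze a downward parabola $q$ between $\orho$ and its tangent at a touching point $x_*$, and feed this into the second-derivative representation $(K_\alpha*\orho)''(x_*)=\int_\R[\orho(x_*+z)-\orho(x_*)-z\orho'(x_*)]K_\alpha''(z)\,dz$ of Lemma~\ref{l:convo:pp}, which is then bounded via \eqref{pf1}. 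Your proposal does not supply a substitute for this step, and without it the factor inside the logarithm of $A_n^\alpha$ is unjustified.

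The second problem is in your lower bound on the energy difference. Smearing each $\ox_i$ into a bump of width comparable to the local spacing, and then controlling the bump self-energies and the ``off-diagonal Taylor errors,'' requires a spacing lower bound for the \emph{discrete} minimiser $\obx$, uniform over $\max\{c,q_2\}<\alpha\leq n$. No such bound is available in the paper (nor do you prove one), and it would be a nontrivial additional lemma in this $\alpha$-uniform regime. The paper's route in Section~\ref{s:pf:t:E:LB} avoids it entirely: it works directly with the empirical measure $\mu_n$ and decomposes $K=L+M$ (Lemma~\ref{l:KLM}) into a convex, integrable, positive-definite regularisation $L$ and a nonnegative remainder $M$ supported near $0$ with small integral. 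The off-diagonal $L_\alpha$-interaction of $\nu_n=\mu_n-\orho$ is nonnegative, the cross term $\int(M_\alpha*\orho)\,d\mu_n$ is controlled by $\|\orho\|_\infty\cdot\int M_\alpha$, and the only genuine error is $\tfrac1n L_\alpha(0)$ --- all obtained without any spacing information on $\obx$. The same decomposition and a Cauchy--Schwarz bound in the $L_\alpha$-seminorm then produce the potential-difference lower bound in Section~\ref{s:pf:t:F:LB}; your version of that step, running through $\|\tilde\rho-\orho\|_{K_\alpha}$ with the smeared measure $\tilde\rho$, inherits the missing spacing estimate. So while the high-level structure of your proposal is recognisable, the two ingredients that actually make the theorem true --- the $\alpha$-uniform $L^\infty$ bound on $\orho$ and the mechanism that dispenses with discrete spacing estimates --- are either wrong or absent.
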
 

Next we give four comments on the statement of Theorem \ref{t}. First, the three estimates involving $A_n^\alpha$ are interesting when $A_n^\alpha$ is sufficiently small, i.e.\ when the first term in the minimum in the expression of $A_n^\alpha$ is significantly smaller than $1$. Otherwise, Theorem \ref{t} can be proven from a priori estimates (see the argument at the start of Section \ref{s:pf:t}), and the resulting bounds on the potential difference is qualitatively similar to \eqref{HT19:estimate:I}. Hence, Theorem \ref{t} is interesting for $\alpha \ll \frac n{\log n}$ as $n \to \infty$. The required bound $\alpha \leq n$ is chosen for convenience to cover this range.

Second, certain bounds on the energy difference have already been established in \cite{KimuraVanMeurs21}: 

\begin{thm}[{\cite{KimuraVanMeurs21}}]  \label{t:KvM21}
Under the conditions in Theorem \ref{t}, there exists a constant $D_\alpha > 0$ independent of $n$ and with continuous dependence on $\alpha$ such that
\[
  |E_n^\alpha(\obx) - E^\alpha (\orho)| \leq D_\alpha \frac{\log n}n.
\] 
\end{thm}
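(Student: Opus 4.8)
The plan is to establish the two one-sided bounds $E_n^\alpha(\obx)-E^\alpha(\orho)\le D_\alpha\frac{\log n}{n}$ and $E^\alpha(\orho)-E_n^\alpha(\obx)\le D_\alpha\frac{\log n}{n}$ separately, each by comparing the relevant minimiser against an explicit competitor built from the other minimiser. I will use the following structural facts, all available: $K_\alpha$ (see \eqref{Kal}) is even, nonnegative, integrable, exponentially decaying together with $K_\alpha'$ and $K_\alpha''$, strictly convex and decreasing on $(0,\infty)$ with $K_\alpha''>0$ decreasing on $(0,\infty)$, and near $v=0$ it has the logarithmic blow-up $K_\alpha(v)\sim\alpha\log\frac1{|\alpha v|}$, with $K_\alpha'(v)\sim-\alpha/v$ and $K_\alpha''(v)\sim\alpha/v^2$; $Q_\alpha$ is convex and $C^4$; and $\orho$ is non-atomic, satisfies $E^\alpha(\orho)\in[\tfrac1C,C]$ by \eqref{Ea:ULB} and $\supp\orho\subset[-C,C]$ by Lemma~\ref{l:orhoa:supp}. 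The lower bound also needs two a priori estimates on a discrete minimiser $\obx$, which I discuss last: \textup{(i)} $\{\ox_1,\dots,\ox_n\}\subset[-C_\alpha,C_\alpha]$, and \textup{(ii)} every interval of length $1/n$ contains at most $C_\alpha$ of the points $\ox_i$. Throughout $C_\alpha$ denotes a constant independent of $n$ with continuous dependence on $\alpha$.

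\textbf{Upper bound (and an a priori energy bound).} Let $I_1<\dots<I_n$ be the consecutive quantile slabs with $\orho(I_k)=1/n$, and let $\ox_k^\ast:=n\int_{I_k}x\,d\orho(x)$ be the $\orho$-barycentre of $I_k$. Non-atomicity makes these points well defined and strictly increasing, so $\bx^\ast:=(\ox_1^\ast,\dots,\ox_n^\ast)\in\Omega_n$ and $E_n^\alpha(\obx)\le E_n^\alpha(\bx^\ast)$ by minimality. Convexity of $Q_\alpha$ and Jensen give $\tfrac1n\sum_kQ_\alpha(\ox_k^\ast)\le\int_\R Q_\alpha\,d\orho$, with no error. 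For $k\ne l$ the slabs are disjoint and ordered, so $x-y$ keeps a fixed sign on $I_k\times I_l$; since $K_\alpha$ is convex on each half-line, Jensen for the affine variable $x-y$ gives $K_\alpha(\ox_k^\ast-\ox_l^\ast)\le n^2\int_{I_k}\int_{I_l}K_\alpha(x-y)\,d\orho(x)\,d\orho(y)$, and summing (adding the nonnegative diagonal slabs back) yields $\tfrac1{n^2}\sum_{k\ne l}K_\alpha(\ox_k^\ast-\ox_l^\ast)\le\int\!\!\int K_\alpha\,d\orho\,d\orho$. Plugging into \eqref{Ena:scaling} and writing $\tfrac1{n-1}=\tfrac1n\cdot\tfrac n{n-1}$, one gets $E_n^\alpha(\bx^\ast)\le E^\alpha(\orho)+\tfrac1{2(n-1)}\int\!\!\int K_\alpha\,d\orho\,d\orho\le E^\alpha(\orho)+\tfrac C{n-1}$, using $\int\!\!\int K_\alpha\,d\orho\,d\orho\le2E^\alpha(\orho)\le2C$. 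Hence $E_n^\alpha(\obx)-E^\alpha(\orho)\le C/(n-1)$, which is even stronger than claimed, and in particular $E_n^\alpha(\obx)\le2C$ for $n\ge2$; this unconditional energy bound will feed the estimates \textup{(i)}--\textup{(ii)}.

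\textbf{Lower bound.} Set $r:=1/n$, let $\eta_r$ be the uniform density on $[-r,r]$, put $\nu_i:=\delta_{\ox_i}\ast\eta_r$ and $\hat\rho:=\tfrac1n\sum_i\nu_i\in\cP(\R)$; by minimality of $\orho$ it suffices to show $E^\alpha(\hat\rho)\le E_n^\alpha(\obx)+D_\alpha\frac{\log n}{n}$. Expand $E^\alpha(\hat\rho)=\tfrac1{2n^2}\sum_{i,j}\int\!\!\int K_\alpha\,d\nu_i\,d\nu_j+\tfrac1n\sum_i\int Q_\alpha\,d\nu_i$ and compare term by term with $E_n^\alpha(\obx)$. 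Since each $\nu_i$ has mean $\ox_i$ and, by \textup{(i)}, lies in a fixed compact set, a second-order Taylor expansion bounds the confinement discrepancy by $\tfrac12\|Q_\alpha''\|_{L^\infty}r^2=O(1/n^2)$. The diagonal interaction terms give $\tfrac1{2n^2}\sum_i\int\!\!\int K_\alpha\,d\nu_i\,d\nu_i=\tfrac1{2n^2}\sum_i\bigl(\alpha\log\tfrac1{\alpha r}+O(\alpha)\bigr)=\tfrac\alpha{2n}\log\tfrac n\alpha+O\bigl(\tfrac\alpha n\bigr)=O\bigl(\tfrac{\log n}n\bigr)$, using $\alpha r=\alpha/n\le1$, the logarithmic singularity and the exponential decay of $K_\alpha$; this single term is the source of the $\log n$. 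For the off-diagonal terms write $\Delta=\ox_i-\ox_j$. If $|\Delta|\le4r$, bound crudely $\int\!\!\int K_\alpha\,d\nu_i\,d\nu_j\le\sup_x\int K_\alpha(x-\cdot)\,d\nu_j=\alpha\log\tfrac1{\alpha r}+O(\alpha)=O(\log n)$, and by \textup{(ii)} there are only $O(n)$ such ordered pairs, contributing $O(\log n/n^2)$. If $|\Delta|>4r$ then $x-y$ has fixed sign on $\supp\nu_i\times\supp\nu_j$, and Taylor expansion of $K_\alpha$ at $\Delta$ together with the vanishing first moments of $\nu_i,\nu_j$ and the monotonicity of $K_\alpha''$ gives $\int\!\!\int K_\alpha\,d\nu_i\,d\nu_j\le K_\alpha(\Delta)+CK_\alpha''(|\Delta|/2)r^2$. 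By \textup{(ii)}, $\sum_{j:\,|\ox_i-\ox_j|>4r}K_\alpha''(|\ox_i-\ox_j|/2)=O(\alpha n^2)$ for each $i$ (at most $O(1)$ points per scale-$r$ annulus, $K_\alpha''(v)\lesssim\alpha/v^2$ for $v$ small and exponential decay otherwise), so these errors sum to $\tfrac{Cr^2}{2n^2}\cdot n\cdot O(\alpha n^2)=O(\alpha/n)$. Collecting, and using $\tfrac1{2n^2}\sum_{i\ne j}K_\alpha(\ox_i-\ox_j)\le\tfrac1{2n(n-1)}\sum_{i\ne j}K_\alpha(\ox_i-\ox_j)$, we obtain $E^\alpha(\hat\rho)\le E_n^\alpha(\obx)+O(\log n/n)$. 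Combining with the upper bound gives $|E_n^\alpha(\obx)-E^\alpha(\orho)|\le D_\alpha\frac{\log n}{n}$, where $D_\alpha$ is built from $\|Q_\alpha''\|_{L^\infty}$, the constant in \eqref{Ea:ULB}, and the a priori constants, hence is independent of $n$ and continuous in $\alpha$.

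\textbf{Main obstacle.} The crux is the a priori regularity of the discrete minimiser, in particular estimate \textup{(ii)}: the minimiser does not pile up more than $O(1)$ particles at the natural inter-particle scale $1/n$. This follows from the Euler--Lagrange identity $\tfrac1{n(n-1)}\sum_{j\ne i}K_\alpha'(\ox_i-\ox_j)+\tfrac1nQ_\alpha'(\ox_i)=0$: the singular repulsion $K_\alpha'(v)\sim-\alpha/v$ cannot be balanced if many particles occupy a short interval, since then the outward force on the cluster's extreme particles overwhelms both the confinement (of size $O(1/n)$) and the interaction with the rest of the configuration; making this quantitative — e.g.\ via a monotonicity/comparison argument bounding the discrete counting function, or by comparing $\obx$ with the monotone rearrangement of the continuum density $\orho$ — is the technical heart (this is the step imported from \cite{KimuraVanMeurs21}) and the part I expect to be hardest. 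Estimate \textup{(i)} is softer: the energy bound $E_n^\alpha(\obx)\le2C$ forces a positive fraction of the particles into the fixed compact set $\{Q_\alpha\le4C\}$, after which the Euler--Lagrange identity confines every particle to $[-C_\alpha,C_\alpha]$, because an escaping particle would feel only exponentially small repulsion from the bulk, insufficient to balance the confining force $\gtrsim1/n$. (A sharper version of \textup{(ii)}, namely $\ox_{i+1}-\ox_i\gtrsim1/n$ for all $i$, would permit particle-dependent mollification widths in the lower bound and thereby upgrade $O(\log n/n)$ to $O(1/n)$, matching the upper bound; this is not needed here and is of the type pursued in Theorem~\ref{t}.)
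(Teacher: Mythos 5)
Your upper bound is correct, and in fact stronger than needed: the quantile--barycentre construction combined with Jensen's inequality (applied to the convex $Q_\alpha$ and to $K_\alpha$ on each half-line) yields $E_n^\alpha(\obx)-E^\alpha(\orho)\le C/(n-1)$ with no logarithm; this is essentially the mechanism behind Theorem \ref{t:TS19}, and it is a cleaner route than the step-function comparison used in Section \ref{s:pf:t:E:UB}. The genuine gap is in your lower bound. The smearing argument stands or falls with the two a priori estimates on the \emph{discrete} minimiser that you postpone to the end, above all estimate (ii): if clusters of more than $O_\alpha(1)$ particles per interval of length $1/n$ are not excluded, the number of "close" ordered pairs can be of order $n^2$ rather than $n$, and the cost of order $\alpha\log\frac n\alpha$ per close pair then contributes $O(\alpha\log n)$ instead of $O(\alpha\frac{\log n}n)$, destroying the rate; estimate (i) is likewise indispensable for the Taylor bound on $Q_\alpha$ to produce an $n$-independent constant (for rapidly growing $Q$, a particle escaping to distance growing with $n$ would make $\|Q_\alpha''\|$ on the relevant set blow up). You justify both only heuristically from the discrete Euler--Lagrange identity. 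Making (ii) quantitative --- a separation or non-clustering estimate for minimisers of \eqref{Ena:scaling} --- is a substantial rigidity statement in its own right, and you cannot simply import it from \cite{KimuraVanMeurs21}, because the proof there does not establish or rely on any such property of $\obx$.

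Indeed, the argument in \cite{KimuraVanMeurs21} (reproduced and sharpened in Section \ref{s:pf:t:E:LB}) circumvents your main obstacle entirely. It works with the signed measure $\nu_n=\mu_n-\orho$, uses the \emph{continuum} Euler--Lagrange inequality \eqref{EL:ineq:on:R} to show that the linear terms in the expansion \eqref{pf:zy} are nonnegative, and splits $K=L+M$ as in \eqref{KLM}, where $L$ is convex and integrable --- hence $(\cdot,\cdot)_L$ is positive semi-definite by Lemma \ref{l:KLM}\ref{l:KLM:Lnorm} --- and $M=K-L\ge0$ is supported in $[-\sigma,\sigma]$ with $\int M\lesssim\sigma|\log\sigma|$. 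The only negative contributions are the removed diagonal $-\frac1nL_\alpha(0)\sim-\frac\alpha n\log\frac1\sigma$ and a cross term controlled by $\|\orho\|_\infty\int M_\alpha$; optimising $\sigma$ gives the $\frac{\log n}n$ rate. The a priori information needed is thus regularity of the \emph{continuum} minimiser ($\supp\orho$ compact, $\|\orho\|_\infty<\infty$), which is accessible through the continuum Euler--Lagrange equation (this is exactly what Proposition \ref{prop:orhoa:sup:bd} quantifies), rather than regularity of the discrete minimiser, which is much harder to extract. To complete your proof you would either have to supply full proofs of (i) and (ii), or replace the treatment of the near-diagonal pairs by an argument of this positive-definiteness type.
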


\noindent Therefore, when proving the bounds on the energy difference in Theorem \ref{t} we may assume that $\alpha \to \infty$ as $n \to \infty$. 

Third, in Theorem \ref{t} we exclude the case $\alpha \to 0$ as $n \to \infty$; we comment on this in Section \ref{s:main:disc}. 

Fourth, while we consider a specific interaction potential $K$ (see \eqref{eq:def_K}), we only use this explicit form for the upper bound on the potential difference. Our proof for the other three bounds easily extends to other potentials which are even, convex on $(0,\infty)$, singular at $0$ and integrable on $\R$ (such as the potential used in our application to plasticity). The corresponding constant $A_n^\alpha$ may be different.
\medskip

Next we comment on the proof of Theorem \ref{t}. The complete proof is given in Section \ref{s:pf:t}. The upper bound on the potential difference is given by Theorem \ref{t:TS19}. For the bounds on the energy difference $E_n^\alpha(\obx) - E^\alpha (\orho)$ we follow the steps of the proof of Theorem \ref{t:KvM21} in \cite{KimuraVanMeurs21}. The foundation of these steps are several quantitative properties of $\orho$ established \cite{KimuraVanMeurs20DOI}.  One of these properties is that $\| \orho \|_\infty \leq C_\alpha$ for some implicit constant $C_\alpha > 0$.  In our setting we require an explicit bound in terms of $\alpha$. The proof in \cite{KimuraVanMeurs20DOI} seems not suited for this, because it treats the logarithmic part of $K$ as the main contribution while the tails (here, the deviation of $K$ from the logarithm) are treated as a regular perturbation. As $\alpha$ gets large, the contribution of the tails becomes dominant, and thus treating them as a perturbation is expected to result in $C_\alpha$ to be unnecessary large. Therefore, we construct a new proof to bound $\| \orho \|_\infty$. The result is 
$$\| \orho \|_\infty \leq C(\| Q_\alpha'' \|_{L^\infty(\supp \orho)} + 1);$$ 
see Proposition \ref{prop:orhoa:sup:bd} below for a precise statement. This is the sole reason for the appearance of $Q_\alpha''$ in the estimates in Theorem \ref{t}. Furthermore, the proof of Proposition \ref{prop:orhoa:sup:bd} is the only place where we use the rather high regularity requirement $Q \in C^4(\R)$ (such that $\orho$ is of class $C^2$ inside its support); all other arguments carry through for $Q \in C^2(\R)$. The proof of Proposition \ref{prop:orhoa:sup:bd} is unconventional; we consider it the main mathematical contribution of this paper.

To appreciate that the upper bound on $\| \orho \|_\infty$ depends on $\alpha$ (and is actually finite), consider the example $Q(x) = e^{|x|^p} - 1$. From the (hyper)exponential growth and the normalization \eqref{Qa:normzn} it follows that
\[
  Q_\alpha(x) 
  \xto{\alpha \to \infty} Q_\infty(x) := \begin{cases}
    0
    &\text{if } |x| < 1 \\
    \infty
    &\text{if } |x| > 1
  \end{cases}  
\]
for all $|x| \neq 1$.
If we replace $Q_\alpha$ in $E^\alpha$ with $Q_\infty$, then it follows from \cite[Theorem 1.4]{KimuraVanMeurs20DOI} that the corresponding minimizer is not in $L^\infty(\R)$. Hence, we expect that for certain choices of $Q$ we have $\| \orho \|_\infty \to \infty$ as $\alpha \to \infty$.

Yet, the dependence of our bound on $\| \orho \|_\infty$ on $Q''$ is curious. We expect that it appears due to our unconventional proof, and that there exists a different bound which requires less regularity on $Q$.   We demonstrate in Section \ref{s:main:cor} that our current bound is sufficient for practical examples of $Q$, but also that there are pathological choices for which our bound seems suboptimal.

\subsection{Corollaries}
\label{s:main:cor}

In this section we present simplified statements of Theorem \ref{t} and Theorem \ref{t:intro} which are useful in practice. With this aim, for $a_n, b_n > 0$ let
\begin{align*}
&\text{``}a_n \ll b_n \text{ as } n \to \infty \text{"} 
&&\text{be defined as}  
&\lim_{n \to \infty} \frac{a_n}{b_n} &= 0, \\
&\text{``}a_n \sim b_n \text{ as } n \to \infty \text{"} 
&&\text{be defined as}  
&0 < \liminf_{n \to \infty} \frac{a_n}{b_n} &\leq \limsup_{n \to \infty} \frac{a_n}{b_n} < \infty, \\
&\text{``}a_n \lesssim b_n \text{ as } n \to \infty \text{"} 
&&\text{be defined as}  
&\limsup_{n \to \infty} \frac{a_n}{b_n} &< \infty.
\end{align*}
Whenever it is clear from the context, we omit ``as $n \to \infty$".

\begin{cor}[Simple version of Theorem \ref{t}]  \label{c:t}
Let $Q$ satisfy Assumption \ref{a:Q}, \eqref{Q:min0} and \eqref{Pinv:ass:al}. If $\| Q_\alpha'' \|_{L^\infty(\supp \orho)} \ll \alpha^M$ as $\alpha \to \infty$ for some $M > 0$, then there exists a $C > 0$ such that for $1 \ll \alpha \ll n$ as $n, \alpha \to \infty$ we have  
\[
  -C\sqrt{ \alpha \frac{\log n}n }
   \leq F_n^\alpha - F^\alpha 
   \leq 0
\]
and 
\[
  - C\alpha \frac{\log n}n
   \leq E_n^\alpha(\obx) - E^\alpha (\orho) 
   \leq C\alpha \frac{\log n}n.
\] 
\end{cor}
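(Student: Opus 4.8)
The plan is to deduce Corollary \ref{c:t} directly from Theorem \ref{t} by specializing the scaling regime and simplifying the constant $A_n^\alpha$. First I would note that Theorem \ref{t} applies on the range $\max\{c,q_2\} < \alpha \le n$ for any fixed $c > 0$; since $1 \ll \alpha \ll n$ as $n, \alpha \to \infty$, for $n$ large enough we have $\alpha > \max\{c, q_2\}$ and $\alpha \le n$, so the hypotheses of Theorem \ref{t} are met and we obtain the constants $C, C'$ together with the three estimates in terms of $A_n^\alpha$. The only work is to show that the hypothesis $\| Q_\alpha'' \|_{L^\infty(\supp\orho)} \ll \alpha^M$ forces $A_n^\alpha \lesssim \alpha \tfrac{\log n}{n}$.

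To do this, recall $A_n^\alpha = C'\min\{ \tfrac\alpha n \log( \tfrac n\alpha (1 + \| Q_\alpha'' \|_{L^\infty(\supp\orho)})), 1\}$, and bound the first argument of the minimum. Using $1 + \| Q_\alpha'' \|_{L^\infty(\supp\orho)} \lesssim \alpha^M$ (valid for $\alpha$ large, absorbing the $1$), we get
\[
  \frac\alpha n \log\Big( \frac n\alpha \big(1 + \| Q_\alpha'' \|_{L^\infty(\supp\orho)}\big)\Big)
  \lesssim \frac\alpha n \log\Big( \frac n\alpha \alpha^M \Big)
  = \frac\alpha n \big( \log n - \log\alpha + M\log\alpha \big)
  \le \frac\alpha n \, (1 + M) \log n,
\]
where in the last step I used $\log n - \log\alpha \le \log n$ and $\log\alpha \le \log n$ (both hold since $1 \le \alpha \le n$ for $n$ large). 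Hence $A_n^\alpha \le C'(1+M)\, \alpha \tfrac{\log n}n$, and in particular (since $\alpha \ll n$ so $\alpha\tfrac{\log n}n$ may exceed $1$ only in a regime we do not care about — but the $\min$ with $1$ only makes it smaller) $A_n^\alpha \lesssim \alpha\tfrac{\log n}n$. Plugging this into the bounds of Theorem \ref{t} gives $E_n^\alpha(\obx) - E^\alpha(\orho) \in [-A_n^\alpha, A_n^\alpha] \subset [-C\alpha\tfrac{\log n}n, C\alpha\tfrac{\log n}n]$ and $F_n^\alpha - F^\alpha \in [-\sqrt{A_n^\alpha}, 0] \subset [-C\sqrt{\alpha\tfrac{\log n}n}, 0]$ after renaming the constant $C$, which is exactly the claimed conclusion. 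The statement $\supp\orho \subset [-C,C]$ is inherited verbatim from Theorem \ref{t}.

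There is essentially no hard obstacle here; the corollary is a cosmetic repackaging of Theorem \ref{t}. The only points requiring mild care are: (i) checking that the asymptotic hypotheses $1 \ll \alpha \ll n$ eventually place $(n,\alpha)$ inside the admissible window of Theorem \ref{t}, so that the quantifier ``for $n$ large'' implicit in the $\ll$/$\lesssim$ notation is legitimate; (ii) making sure the chain of $\lesssim$ estimates is uniform — i.e.\ the implied constants depend only on $M$ and on the $C'$ from Theorem \ref{t}, not on the particular sequence $(n,\alpha)$ — which holds because each inequality used ($\log\alpha \le \log n$, $1 + \|Q_\alpha''\| \lesssim \alpha^M$, etc.) is a pointwise bound valid for all sufficiently large $\alpha \le n$; and (iii) noting that the $\min\{\cdot,1\}$ in $A_n^\alpha$ can only help, so it may simply be dropped in the upper estimate. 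I would write this up in a few lines, citing Theorem \ref{t} for the structural estimates and the hypothesis on $\|Q_\alpha''\|$ for the polynomial control of the logarithm.
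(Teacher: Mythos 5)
Your argument is exactly the paper's: the paper also dismisses the corollaries as ``obvious'' by substituting the polynomial bound on $\|Q_\alpha''\|_{L^\infty(\supp\orho)}$ into the logarithm in $A_n^\alpha$, so that the logarithm reduces to $C\log n$, and then reading off the bounds from Theorem~\ref{t}. Your write-up just spells out the algebra (and the uniformity of the implied constants) that the paper leaves implicit.
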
 

\begin{cor}[Simple version of Theorem \ref{t:intro}] \label{c:t:intro}
Let $\beta = 1$ and let $Q$ satisfy Assumption \ref{a:Q}, \eqref{Q:min0} and \eqref{Pinv:ass}. If $\| Q'' \|_{L^\infty(\supp \omu)} \ll n^M$ as $n \to \infty$ for some $M > 0$, then there exists a $C > 0$ such that for all $n$ large enough we have
\[
  - C \sqrt{ \frac{n^3 \log n}{P^{-1}(n)}  }
   \leq F_n^D - F_n^C  
   \leq 0
\]
and 
\[
  - C n \log n
   \leq I_n^D(\oba) - I_n^C (\omu) 
   \leq C n \log n.
\] 
\end{cor}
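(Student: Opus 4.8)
\textbf{Plan of proof for Corollary \ref{c:t:intro}.}

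The strategy is to obtain Corollary \ref{c:t:intro} as a direct consequence of Corollary \ref{c:t} (itself a specialisation of Theorem \ref{t}) by unwinding the rescaling relations between the ``$D/C$'' quantities and the ``$\alpha$'' quantities. Fix $\beta = 1$, so that $\alpha = P^{-1}(n/\beta) = P^{-1}(n)$. The first step is to record the exact conversion formulas. From \eqref{E:def} and \eqref{alpha} with $\gamma = 2n^2/\alpha$ we have $E^\alpha(\rho) = \frac{\alpha}{2n^2} I_n^C(n(\alpha\,\id)_\#\rho)$, hence $E^\alpha(\orho) = \frac{\alpha}{2n^2} I_n^C(\omu)$; similarly $E_n^\alpha(\obx) = \frac{\alpha}{2n^2} I_n^D(\oba)$ by \eqref{Ena:scaling}. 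For the potentials, comparing \eqref{FD:FC} with \eqref{Fan:Fa} and using $Q_\alpha(x) = \frac{\alpha}{P(\alpha)}Q(\alpha x) = \frac{\beta}{n}\,\alpha\, Q(\alpha x)$ together with $\orho = \frac1n(\frac1\alpha\,\id)_\#\omu$, one checks that $\frac1{2n}\sum_i Q_\alpha(\ox_i) = \frac{\alpha}{2n^2}\cdot\beta\sum_i Q(\oa_i)$ and $\frac12\int Q_\alpha\,d\orho = \frac{\alpha}{2n^2}\cdot\beta\int Q\,d\omu$, so that $F_n^\alpha = \frac{\alpha}{2n^2} F_n^D$ and $F^\alpha = \frac{\alpha}{2n^2} F_n^C$ when $\beta = 1$. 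Therefore $F_n^D - F_n^C = \frac{2n^2}{\alpha}(F_n^\alpha - F^\alpha)$ and $I_n^D(\oba) - I_n^C(\omu) = \frac{2n^2}{\alpha}(E_n^\alpha(\obx) - E^\alpha(\orho))$.

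The second step is to verify that the hypotheses of Corollary \ref{c:t} hold. The growth hypothesis in Corollary \ref{c:t} is on $\|Q_\alpha''\|_{L^\infty(\supp\orho)}$; since $Q_\alpha''(x) = \frac{\alpha^3}{P(\alpha)}Q''(\alpha x) = \frac{\alpha^3}{n}Q''(\alpha x)$ (using $P(\alpha) = n$), and $\supp\orho \subset [-C,C]$ implies $\alpha\cdot\supp\orho \subset \supp\omu$ up to the scaling, we get $\|Q_\alpha''\|_{L^\infty(\supp\orho)} \le \frac{\alpha^3}{n}\|Q''\|_{L^\infty(\supp\omu)}$. From the listed properties of $P^{-1}$ we have $\alpha = P^{-1}(n) \lesssim \sqrt n$, so $\frac{\alpha^3}{n} \lesssim \sqrt n$; combined with the assumed $\|Q''\|_{L^\infty(\supp\omu)} \ll n^M$ this gives $\|Q_\alpha''\|_{L^\infty(\supp\orho)} \ll n^{M+1/2}$, and since $\alpha \gtrsim n^{c_0}$ for some $c_0 > 0$ (because $\alpha = P^{-1}(n)$ and $P$ has at most quadratic growth, so $P^{-1}$ has at least square-root growth, hence $\alpha \sim \sqrt n$ up to lower order — in fact $\alpha \gtrsim \sqrt{n}/C$ is enough), one has $n^{M+1/2} \ll \alpha^{M'}$ for suitable $M' > 0$. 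Also $1 \ll \alpha \ll n$ holds: $\alpha \to \infty$ since $P^{-1}(n) \to \infty$, and $\alpha \lesssim \sqrt n \ll n$.

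The third step is pure substitution. Corollary \ref{c:t} gives $|E_n^\alpha(\obx) - E^\alpha(\orho)| \le C\alpha\frac{\log n}{n}$ and $-C\sqrt{\alpha\frac{\log n}{n}} \le F_n^\alpha - F^\alpha \le 0$. Multiplying the first by $\frac{2n^2}{\alpha}$ yields $|I_n^D(\oba) - I_n^C(\omu)| \le 2Cn\log n$, which is the energy bound. Multiplying the second by $\frac{2n^2}{\alpha}$ yields $-2\frac{n^2}{\alpha}\sqrt{\alpha\frac{\log n}{n}} \le F_n^D - F_n^C \le 0$; simplifying, $\frac{n^2}{\alpha}\sqrt{\alpha\frac{\log n}{n}} = n^2\sqrt{\frac{\log n}{\alpha n}} = \sqrt{\frac{n^3\log n}{\alpha}} = \sqrt{\frac{n^3\log n}{P^{-1}(n)}}$, giving exactly the claimed lower bound. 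The upper bound $F_n^D - F_n^C \le 0$ is also Theorem \ref{t:TS19} directly.

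\textbf{Main obstacle.} The only genuinely non-routine point is the bookkeeping in the second step: one must confirm both that $\|Q_\alpha''\|_{L^\infty(\supp\orho)}$ (the quantity appearing in Corollary \ref{c:t}) is controlled by a power of $\alpha$ given the stated control of $\|Q''\|_{L^\infty(\supp\omu)}$ by a power of $n$, and that the relation $\supp\orho = \frac1\alpha\supp\omu$ is compatible with the containments $\supp\orho\subset[-C,C]$ (from Theorem \ref{t}) and $\supp\omu\subset[-C'\alpha,C'\alpha]$ (from Theorem \ref{t:intro}). The key quantitative input is the estimate $\alpha = P^{-1}(n) \sim \sqrt n$ — more precisely the two-sided bound $c\sqrt n \le P^{-1}(n) \le C\sqrt n$ coming from $c x^2 - C \le |P(x)| \le C x^2$ (the quadratic lower bound is in the listed properties of $P$; the quadratic upper bound follows from $\limsup |P^{-1}(y)|/\sqrt{|y|} < \infty$) — which ensures that a power of $n$ and a power of $\alpha$ are interchangeable, and that $\alpha \ll n$. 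Everything else is substitution.
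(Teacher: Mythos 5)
Your conversion formulas in the first and third steps are correct, and the final algebra of the third step is also correct. The gap is in the second step, where you try to deduce the hypothesis of Corollary~\ref{c:t} (namely $\|Q_\alpha''\|_{L^\infty(\supp\orho)} \ll \alpha^{M'}$) from the hypothesis of Corollary~\ref{c:t:intro} ($\|Q''\|_{L^\infty(\supp\omu)} \ll n^M$). To make this work you assert ``$\alpha = P^{-1}(n) \gtrsim n^{c_0}$ for some $c_0 > 0$'' and even ``$\alpha \sim \sqrt{n}$ up to lower order'', citing that ``$P$ has at most quadratic growth''. This is backwards: the property listed in Section~\ref{s:main:prel} is $|P(x)| \geq c x^2 - C$, a quadratic \emph{lower} bound on $P$, which gives the \emph{upper} bound $\alpha \lesssim \sqrt{n}$. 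There is no upper bound on the growth of $P$ under Assumption~\ref{a:Q}, hence no lower bound on $\alpha$ of the form $n^{c_0}$ -- only $\alpha \to \infty$. If $Q$ grows exponentially, $\alpha = P^{-1}(n)$ grows like a polylogarithm of $n$, and then $\|Q_\alpha''\| = \tfrac{\alpha^3}{n}\|Q''\|$ can be a positive power of $n$ while every power of $\alpha$ is subpolynomial, so the needed implication fails. Indeed, the paper only records the implication in the other direction, in Section~\ref{s:main:cor}: the $Q_\alpha''$-bound implies the $Q''$-bound, not conversely. So Corollary~\ref{c:t} is not applicable as stated.

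The fix is to bypass Corollary~\ref{c:t} and use Theorem~\ref{t} (or, as the paper does, Theorem~\ref{t:intro}) directly. With your conversion $\|Q_\alpha''\|_{L^\infty(\supp\orho)} = \tfrac{\alpha^3}{n}\|Q''\|_{L^\infty(\supp\omu)}$ and $\alpha \leq C\sqrt{n}$, one gets $\|Q_\alpha''\| \ll n^{M+1/2}$, and then the argument of the logarithm in $A_n^\alpha$ satisfies $\tfrac{n}{\alpha}\bigl(1 + \|Q_\alpha''\|\bigr) \lesssim n^{M+3/2}$, so $\log(\cdots) \lesssim \log n$ and $A_n^\alpha \lesssim \tfrac{\alpha}{n}\log n$. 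Your third step then carries over verbatim to give the claimed bounds. The key structural difference from Corollary~\ref{c:t} is that the logarithm in $A_n^\alpha$ happily absorbs any fixed power of $n$, whereas the stated hypothesis of Corollary~\ref{c:t} demands control by a power of $\alpha$, which is genuinely stronger when $\alpha$ grows slowly.
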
 

The proofs of these corollaries are obvious; by applying the imposed bounds on $Q''$ and $Q_\alpha''$ in the logarithms in the expressions for $A_n^\alpha$ and $B_n^\beta$, the values of these logarithms simplify to $C \log n$. Then, the bounds in the corollaries above follow directly.
\medskip

Next we demonstrate that for common choices of $Q$ the required bounds on $Q''$ and $Q_\alpha''$ are satisfied. We also provide a pathological example for which these  bounds fail. We start by noting that the bound on $Q''$ in Corollary \ref{c:t:intro} follows from that on $Q_\alpha''$ in Corollary \ref{c:t}. Indeed, by taking $M \geq 3$, $\alpha = P^{-1}(n)$ and recalling the properties of $P^{-1}$ listed in Section \ref{s:main:prel}, we observe from $1 \ll \alpha \lesssim \sqrt n$ that
\[
  \| Q'' \|_{L^\infty(\supp \omu)}
  = \frac{P(\alpha)}{\alpha^3} \| Q_\alpha'' \|_{L^\infty(\supp \orho)}
  \ll n \alpha^{M-3} \lesssim n^{\tfrac{M-3}2 + 1}.
\]
Hence, it is sufficient to verify the bound on $\| Q_\alpha'' \|_{L^\infty(\supp \orho)}$.

The first example is $Q(x) = |x|^p$ with $p \geq 2$ as in Section \ref{s:main:prel}. Recall that $Q_\alpha(x) = (p+1) |x|^p$ is independent of $\alpha$. Hence, using the bound on $\supp \orho$ from Theorem \ref{t}, we get
$$
  \| Q_\alpha'' \|_{L^\infty(\supp \orho)}
  = \| Q'' \|_{L^\infty(\supp \orho)}
  \leq \| Q'' \|_{L^\infty(-C, C)},
$$
where the right-hand side is a constant which only depends on $p$. Hence, the condition in Corollary \ref{c:t} is met for any $M > 0$. A similar conclusion is reached for  $1 \leq p < 2$ when $Q$ is regularized in a neighborhood around $0$. The argument is more subtle. Let $x_*$ be a maximizer of $Q''$. Then
$$
  \| Q_\alpha'' \|_{L^\infty(\supp \orho)}
  \leq \frac{ \alpha^3 }{P(\alpha)} Q''(x_*)
  \leq C \alpha^{2-p},
$$
and thus the condition in Corollary \ref{c:t} is met for any $M > 2-p$.

In the second example we consider $Q(x) = e^{|x|^p} - 1$ for $p \geq 2$. 
Using that $Q_\alpha''(x) \leq C' \alpha^{2(p-1)} (Q_\alpha(x) + 1)$ for all $x \in \supp \orho \subset [-C, C]$, 
we obtain that 
\[
  \| Q_\alpha'' \|_{L^\infty(\supp \orho)}
  \leq C \alpha^{2(p-1)} \| Q_\alpha + 1 \|_{L^\infty(\supp \orho)}.
\]
We show in Lemma \ref{l:orhoa:supp} that the right-hand side is proportional to $\alpha^{2(p-1)}$. Hence, the condition in Corollary \ref{c:t} is met for any $M > 2p - 2$. The case $1 \leq p < 2$ can be treated analogously as in the example above; a regularization around $0$ is required, and the resulting constant $M$ may need to be chosen larger. We omit the details.

Finally, we construct a pathological example of $Q$ for which the condition in Corollary \ref{c:t:intro} is not met. The idea is to take $Q$ as a carefully constructed regularization of a piecewise-linear function $\tilde Q$, i.e.\ $\tilde Q''$ is a sum of delta measures. Let $Q$ be defined by
\[
  Q(0) = Q'(0) = 0, \qquad
  Q''(x) = \sum_{k = 1}^\infty \big( \varphi_{e^{-k}} (x + k) + \varphi_{e^{-k}} (x - k) \big),
\]
where $\varphi_{\delta}$ is the usual mollifier. It is easy to verify that $Q$ satisfies Assumption \ref{a:Q}, \eqref{Q:min0} and \eqref{Pinv:ass}. Moreover, $Q$ is even and has quadratic growth, and thus $P(\alpha) \sim \alpha^3$. Relying on $\supp \omu = \alpha \supp \orho \supset [-c\alpha, c\alpha]$  for some $c > 0$ (see Lemma \ref{l:orhoa:supp}, and note from the evenness of $Q$ that $\orho$ is even) with $\alpha = P^{-1}(n) \sim \sqrt[3]{n}$, we get 
\[
  \| Q'' \|_{L^\infty(\supp \omu)}
  \geq \| Q'' \|_{L^\infty(-c\alpha, c\alpha)}
  \gtrsim \| \varphi_{e^{-c \alpha + 1}}'' \|_\infty
  \sim e^{3 (c \alpha - 1)}
  \gtrsim e^{c' \sqrt[3]{n}},
\]
which does not satisfy the required bound in Corollary \ref{c:t:intro}.

\subsection{Discussion}
\label{s:main:disc}

First we comment on the application of our main results (Theorems \ref{t:intro} and \ref{t} and their corresponding simplifications in Corollaries \ref{c:t} and \ref{c:t:intro}) to approximation theory and plasticity.
\smallskip

\textbf{Application to approximation theory.} 
Our goal was to improve the lower bound in \eqref{HT19:estimate:I}. Here, we compute the factor by which Corollary \ref{c:t:intro} improves this bound. Let $\alpha := P^{-1}(n)$ and recall that $1 \ll P^{-1}(n) \lesssim \sqrt n$ as $n \to \infty$. First, we note from the definition of $F_n^\alpha$ and $E^\alpha(\orho) \geq c > 0$ (recall \eqref{Ea:ULB}) that
\[
  2 F_n^\alpha 
  \geq E_n^\alpha(\obx) 
  \geq E^\alpha (\orho) - C \alpha \frac{\log n}n
  \geq c - C' \frac{\log n}{\sqrt n}
  \xto{n \to \infty} c.
\]
Hence, the left-hand side in \eqref{HT19:estimate:I} is in absolute value bounded from below by $c n^2 / P^{-1}(n)$ for all $n$ large enough. Thus, the bound in Corollary \ref{c:t:intro} improves the bound in \eqref{HT19:estimate:I} by a factor
\[
  C \sqrt{ \frac{n^3 \log n}{P^{-1}(n)}  } \bigg/ \bigg( c \frac{ n^2 }{ P^{-1}(n) } \bigg)
  = \frac Cc \sqrt{ P^{-1}(n) \frac{\log n}n }.
\]
The size of this improvement factor depends on the growth of $Q$. It ranges from $O(\sqrt{(\log n) / n})$ to $O ( \sqrt{(\log n) / \sqrt n} )$. 

As a corollary of the improved lower bound, we get better estimates on the error value in \eqref{eq:WCE}. Indeed, starting from \eqref{cEn:bds}, we obtain
\begin{multline*}
  - \frac{2n^2}{(n-1) P^{-1}(n)} F^\alpha
  = - \frac{ F_n^C }{n-1}
  \leq \log E_{n}^{\mathrm{min}}
  \leq - \frac{ F_n^D }n
  \leq - \frac{ F_n^C }n + C \sqrt{ \frac{n \log n}{P^{-1}(n)} } \\
  = \frac{2n}{P^{-1}(n)} \bigg(-  F^\alpha + \frac C2 \sqrt{ \frac{  \log n}{n} P^{-1}(n) }\bigg).
\end{multline*}
We have no explicit expression for $F^\alpha$ in terms of $n$. However, it is bounded away from $0$ and $\infty$; see \eqref{Ea:ULB} and recall that $\frac12 E^\alpha (\orho) \leq F^\alpha \leq E^\alpha (\orho)$. Hence, the difference between the right- and left-hand side is $O(\sqrt{ (n \log n)/P^{-1}(n)} )$.
\smallskip

\textbf{Application to plasticity.} In \cite{GeersPeerlingsPeletierScardia13}, five different descriptions were found for the limiting particle density  as $n \to \infty$ of the minimizer $\obx$ of an energy of the type \eqref{Ena:scaling}. Each of the five limiting particle densities corresponds to a different scaling regime of $\alpha = \alpha_n$. However, in practice, $n$ and $\alpha$ are given, finite modelling parameters, and thus bounds of the type ${\rm{dist}}  (\obx, \orho) \leq c_n^\alpha$ for explicit $c_n^\alpha \to 0$ as $n \to \infty$ are required to select the best fit out of the five limiting densities. For a distance concept which is related to the energy norm, the goal of bounding ${\rm{dist}}  (\obx, \orho)$ translates to finding upper and lower bounds on the energy difference in Corollary \ref{c:t}; see \cite{KimuraVanMeurs21}. With Corollary \ref{c:t} we have extended the previous bounds for $\alpha \sim 1$ (see Theorem \ref{t:KvM21}) to $1 \lesssim \alpha \ll \frac n{\log n}$.
\bigskip

Finally we comment on possible generalizations of Theorem \ref{t}, which translate directly to generalizations of Theorem \ref{t:intro}. First, the lower bound on the potential difference is weaker than that for the energy difference. We expect that it can be bounded from below by $-A_n^\alpha$, but we have no proof. Second, for the generic case $q_2 = 0$, it is possible to consider the regime $\alpha \ll 1$. In this case, the rescaling from $I_n^C$ to $E^\alpha$ has to be adjusted; see e.g.\ \cite{GeersPeerlingsPeletierScardia13,ScardiaPeerlingsPeletierGeers14}. Since we have no direct application in mind, we have not explored this case any further.

\section{Proof of Theorem \ref{t}}
\label{s:pf:t}

The upper bound on the potential difference $F_n^\alpha - F^\alpha$ is given by Theorem \ref{t:TS19}.
The three remaining bounds in Theorem \ref{t} are easy to prove when $A_n^\alpha$ is replaced by $C'$. Indeed, since $K, Q \geq 0$ we have
\[
  E_n^\alpha(\obx), E^\alpha (\orho) \geq 0.
\]
Furthermore, a priori bounds (see Lemma \ref{l:Ea:bds} below) provide a $C > 0$ independent of $n$ and $\alpha$ such that
\[
  E_n^\alpha(\obx), E^\alpha (\orho) \leq C.
\]
Finally, since $\frac12 E_n^\alpha(\obx) \leq F_n^\alpha \leq E_n^\alpha(\obx)$ and $\frac12 E^\alpha (\orho) \leq F^\alpha \leq E^\alpha (\orho)$, the three desired bounds follow. As a result, in the remainder we may assume that $A_n^\alpha$ is sufficiently small. Hence, it is sufficient to consider
$
  \alpha \leq \frac n{\log n}
$.

In addition to this upper bound on $\alpha$, we may assume by Theorem \ref{t:KvM21} that $\alpha \geq C$ for some constant $C > 0$ independent of $n$ when proving the bounds on the energy difference $E_n^\alpha(\obx) - E^\alpha (\orho)$.  

We prove the three remaining bounds of Theorem \ref{t} in Sections \ref{s:pf:t:E:UB}, \ref{s:pf:t:E:LB} and \ref{s:pf:t:F:LB}.  Prior to this, we cite (Section \ref{s:pf:t:prelim}) and establish (Section \ref{s:pf:t:orho}) preliminary results. In particular, in Section \ref{s:pf:t:orho} we prove new estimates on $\orho$ which are uniform in $\alpha$ and essential to the proof of Theorem \ref{t}.

\subsection{Preliminaries}
\label{s:pf:t:prelim}

\paragraph{Notation.} Here we mention some notation conventions. We reserve $c, C > 0$ for generic constants which do not depend on any of the relevant variables, in particular $n$ and $\alpha$. We use $C$ in upper bounds (and think of it as possibly large) and $c$ in lower bounds (and think of it as possibly small). While $c, C$ may vary from line to line, in each single display their value is fixed. If different constants appear in the same display, we denote them by $C, C', C''$ etc.

We set $B(x, r) := (x-r, x+r) \subset \R$ as the one-dimensional ball, $\cM(\R)$ as the space of signed, finite measures on $\R$ and $\cM_+(\R) \subset \cM(\R)$ as the subspace of non-negative measures.

To avoid clutter, we often omit the integration variable. For instance, for $\rho \in \cP(\R)$, we use
\begin{align*}
  \int_\R Q_\alpha \, d\rho &:= \int_\R Q_\alpha(x) \, d\rho(x) \\
  \int_{-1}^1 K_\alpha &:= \int_{-1}^1 K_\alpha(x) \, dx \\
  (K_\alpha * \rho)(x) &:= \int_\R K_\alpha(x - y) \, d\rho(y),
\end{align*}
and extrapolate this notation to other integrands.

\paragraph{Regularization of $K$.} First, we recall several properties of $K$ which we use throughout the proof of Theorem \ref{t}: 
\begin{itemize}
  \item $K \geq 0$ is even,
  \item on $(0,\infty)$, $K$ is decreasing and strictly convex,
  \item There exist $C_1, \ldots, C_4 > 0$ such that for all $x \neq 0$
\begin{align} \label{pf:zl}
  K(x) 
  &\leq \left\{ \begin{array}{ll}
    C_1 \log \dfrac1{|x|}
    & \text{if } |x| \leq \frac12 \\ 
    C_2
    & \text{otherwise}
  \end{array} \right. \\\notag
  |K'(x)| 
  &\leq \frac{C_3}{|x|} \\\notag
  0 \leq K''(x) 
  &\leq \frac{C_4}{x^2},
\end{align}
  \item There exists a $c > 0$ such that $x^2 K''(x) \geq c$ for all $x \in [-1,1] \setminus \{0\}$,
  \item There exists a $C > 0$ such that 
  \[ |K^{(k)}(x)| \leq C e^{-|x|} \quad \text{for all } |x| \geq 1 \text{ and } k =0,1,2. \]  
\end{itemize}

Next we recall the regularization of $K$ constructed in \cite{KimuraVanMeurs21}.  For fixed regularization parameter $\sigma > 0$, let 
\begin{equation} \label{KLM}
  L (x) := \left\{ \begin{aligned}
    & K(\sigma) + (x - \sigma) K'(\sigma)
    && \text{if } 0 \leq x \leq \sigma \\
    & K(x)
    && \text{if } x > \sigma,
  \end{aligned} \right.
  \qquad \qquad M:= K-L
\end{equation}
with even extension to the negative half-line. Figure \ref{fig:V:puntmuts} illustrates $L$. Lemma \ref{l:KLM} lists the relevant properties of $L$ and $M$.

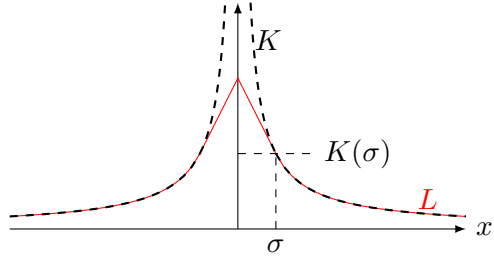
\begin{figure}[ht]
\centering
\begin{tikzpicture}[scale=0.5, >= latex]
\draw[dashed] (1,0) node[below] {$\sigma$} -- (1,2);
\draw[dashed] (0,2) --++ (2,0) node[right] {$K(\sigma)$};             
\draw[->] (-6,0) -- (6,0) node[right] {$x$};
\draw[->] (0,0) -- (0,6);
\draw[ red] (-1,2) -- (0,4) -- (1,2);
\draw[ domain=-6:-1, red] plot (\x,{-2/\x});
\draw[ domain=1:6, red] plot (\x,{2/\x});
\draw[thick, dashed, domain=-6:-1] plot (\x,{-2/\x});
\draw[thick, dashed, domain=0.3333:1] plot (\x,{2/\x});
\draw[thick, dashed, domain=-1:-0.3333] plot (\x,{-2/\x});
\draw[thick, dashed, domain=1:6] plot (\x,{2/\x});
\draw (.8,5) node {$K$};
\draw[red] (5,.8) node {$L$};
\end{tikzpicture} \\
\caption{The regularization $L$ of $K$ by piecewise-affine extension on $[-\sigma, \sigma]$.}
\label{fig:V:puntmuts}
\end{figure}

\begin{lem} [Properties of $L$ and $M$ {\cite[Lemma 5.2]{KimuraVanMeurs21}}] \label{l:KLM} 
There exists a constant $C > 0$ such that for all $\sigma > 0$:
\begin{enumerate}[label=(\roman*)]
  \item \label{l:KLM:cv} $L$ is convex on $(0,\infty)$; 
  \item \label{l:KLM:Lnorm} $(\mu, \nu)_L := \int_\R (L * \mu) \, d\nu$ is a degenerate inner product on $\cM(\R)$;
  \item \label{l:KLM:L0} $L(0) \leq C (|\log \sigma| + 1)$; 
  \item \label{l:KLM:Mint} $\int_\R M \leq C \sigma(|\log \sigma| + 1)$.   
\end{enumerate}
\end{lem}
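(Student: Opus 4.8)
The plan is to split the four claims by difficulty: items (i), (iii) and (iv) are direct consequences of the definition \eqref{KLM} together with the elementary properties of $K$ recalled above \eqref{pf:zl}, while (ii) is the only one carrying a genuine idea. I would dispose of the first three quickly and then concentrate on (ii).

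For (i), I would examine the one-sided derivatives of $L$ on $(0,\infty)$: on $(0,\sigma)$ one has $L'\equiv K'(\sigma)$, on $(\sigma,\infty)$ one has $L'=K'$, and the two agree at $x=\sigma$, so $L$ is $C^1$ there. Since $K$ is convex on $(0,\infty)$, $K'$ is non-decreasing, hence $K'(x)\ge K'(\sigma)$ for $x\ge\sigma$; therefore $L'$ is non-decreasing on $(0,\infty)$, which is the asserted convexity. For (iii), $L(0)=K(\sigma)-\sigma K'(\sigma)$ by \eqref{KLM}; the bound $K(\sigma)\le C(|\log\sigma|+1)$ is precisely \eqref{pf:zl} (treating $\sigma\le\tfrac12$ and $\sigma>\tfrac12$ separately), and $\sigma|K'(\sigma)|\le C_3$ is the gradient bound in \eqref{pf:zl}; summing gives $L(0)\le C(|\log\sigma|+1)$ with a $\sigma$-independent constant.

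For (iv) the structural points are that $\supp M\subset[-\sigma,\sigma]$, because $L=K$ on $\{|x|>\sigma\}$, and that $0\le M\le K$ on $[0,\sigma]$: on $[0,\sigma]$ the function $L$ is the tangent line to $K$ at $\sigma$, so convexity of $K$ on $(0,\infty)$ gives $K\ge L$, while $L\ge 0$ there since $L$ is affine with positive endpoint values $L(\sigma)=K(\sigma)$ and $L(0)=K(\sigma)-\sigma K'(\sigma)$. Using evenness, $\int_\R M=2\int_0^\sigma M\le 2\int_0^\sigma K$, and $\int_0^\sigma K\le C\sigma(|\log\sigma|+1)$ follows by splitting the integral at $\tfrac12$, using $\int_0^{1/2}\log\tfrac1x\,dx<\infty$ for the singular part and the integrability of $K$ on $\R$ for the tail $\sigma>\tfrac12$.

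The substance of the lemma is (ii). Bilinearity is immediate and symmetry follows from Fubini and the evenness of $L$; the integral $\int_\R(L*\mu)\,d\nu$ is finite for all $\mu,\nu\in\cM(\R)$ since $L$ is bounded. The real point is positive semi-definiteness, $\int_\R(L*\mu)\,d\mu\ge 0$ for every $\mu\in\cM(\R)$. Here I would invoke that $L$ is even, continuous, non-increasing and convex on $(0,\infty)$ with $L(x)\to 0$ as $x\to\infty$, so that P\'olya's criterion applies and $L$ is a positive-definite kernel. Concretely, a short integration by parts (using $K(t),\,tK'(t)\to 0$) yields the representation
\[
  L(x)=\int_\sigma^\infty\Bigl(1-\frac{|x|}{t}\Bigr)_+\,t\,K''(t)\,dt,
\]
exhibiting $L$ as a superposition, with nonnegative weight $t\,K''(t)\,dt$, of the triangular functions $\bigl(1-|x|/t\bigr)_+$, each of which has nonnegative Fourier transform $\propto\xi^{-2}(1-\cos t\xi)$. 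Hence $\widehat L\ge 0$, and $\int_\R(L*\mu)\,d\mu=c\int_\R\widehat L(\xi)\,|\widehat\mu(\xi)|^2\,d\xi\ge 0$. The word ``degenerate'' merely records that we neither claim nor need positive \emph{definiteness}: the form is used afterwards only as a semi-inner product, e.g.\ through the Cauchy--Schwarz inequality it induces. Consequently the only step with non-routine content is this positive semi-definiteness of $(\cdot,\cdot)_L$, and even there the required input — P\'olya's criterion, equivalently the triangular-function decomposition — is classical, so I do not anticipate a real obstacle.
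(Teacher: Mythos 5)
Your arguments for (i), (iii) and (iv) follow the paper's proof essentially verbatim: convexity of $K$ gives (i), the two bounds $K(\sigma)\le C(|\log\sigma|+1)$ and $\sigma|K'(\sigma)|\le C_3$ give (iii), and $\int_\R M\le 2\int_0^\sigma K$ together with $\int_0^\sigma(-\log)\,dx=\sigma(|\log\sigma|+1)$ gives (iv). The one place where you diverge is (ii): the paper simply remarks that convexity and integrability of $L$ imply the degenerate inner-product property and refers the reader to \cite[Lemmas 3.1 and 3.2]{KimuraVanMeurs20DOI}, whereas you give a self-contained proof by exhibiting
\[
  L(x)=\int_\sigma^\infty\Bigl(1-\frac{|x|}{t}\Bigr)_+\,t\,K''(t)\,dt,
\]
i.e.\ the P\'olya tent decomposition, and then concluding $\widehat L\ge 0$ from the nonnegativity of $t\,K''(t)$ (convexity) and of the triangle-function transforms $\propto\xi^{-2}(1-\cos t\xi)$. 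The integration-by-parts verification of this identity is correct on both ranges $0\le x<\sigma$ and $x\ge\sigma$, and the passage to $\int(L*\mu)\,d\mu=c\int\widehat L|\widehat\mu|^2\ge 0$ is legitimate since $L\in L^1\cap L^\infty$ and $\mu$ is a finite signed measure. So you have, in effect, unpacked the cited reference; this buys self-containedness at the cost of a slightly longer argument, but the mathematics is the same (P\'olya's criterion is precisely ``convexity $+$ integrability $\Rightarrow$ positive-definiteness''). No gaps.
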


\begin{proof} 
\ref{l:KLM:cv} is a direct consequence from the convexity of $K$. Convexity and integrability of $L$ imply \ref{l:KLM:Lnorm}; see the proofs of \cite[Lemmas 3.1 and 3.2]{KimuraVanMeurs20DOI}. Noting that \ref{l:KLM:L0} and \ref{l:KLM:Mint} are obvious for $\sigma$ bounded away from $0$, it suffices to prove them for $\sigma < \frac12$. For such $\sigma$, we have $K(\sigma) \leq C |\log \sigma|$ and $|K'(\sigma)| \leq C' / \sigma$. Then, \ref{l:KLM:L0} follows directly from \eqref{KLM}. Finally, \ref{l:KLM:Mint} follows from
\begin{align*}
  \int_\R M
  \leq \int_{-\sigma}^\sigma K
  = 2 \int_0^\sigma K
  \leq C \int_0^\sigma -\log x \, dx
  = C \sigma (|\log \sigma| + 1).
\end{align*}
\end{proof}

\paragraph{Properties of $\orho$.} The energy $E^\alpha$ fits to the setting of \cite[Theorem 6.5]{KimuraVanMeurs20DOI} (except for a trivial rescaling to get $K$ of unit integral). This theorem states that for $\alpha > q_2$ fixed: \comm{This theorem does not care about $\alpha$, but for $\alpha \leq q_2$ the function $Q_\alpha$ is not defined}
\begin{itemize}
  \item there exists a unique minimizer $\orho \in \cP(\R)$ of $E^\alpha$,
  \item there exist $y_1^\alpha < y_2^\alpha$ such that 
  \[
    \supp \orho = [y_1^\alpha, y_2^\alpha],
  \]
  \item $\orho$ is absolutely continuous as a measure, and its density (also denoted by $\orho$) is H\"older continuous on $\R$,
  \item since $Q_\alpha \in C^4(\R) \subset W_{\rm loc}^{4,\infty} (\R)$, we have $\orho \in W^{3,p} (y_1^\alpha, y_2^\alpha) \subset C^2 ((y_1^\alpha, y_2^\alpha))$ for some $p > 1$,
  \item there exists $C_\alpha > 0$ such that $\displaystyle \frac{ \orho (x) }{ \sqrt{ (x - y_1^\alpha)(y_2^\alpha - x) } } \leq C_\alpha$ for any $x \in (y_1^\alpha, y_2^\alpha)$,
  \item $\orho$ satisfies the Euler-Lagrange equation given by
  \begin{subequations} \label{EL}
  \begin{align}\label{EL:eq}
    K_\alpha * \orho + Q_\alpha 
    &= F^\alpha
    &&\text{on } [y_1^\alpha, y_2^\alpha] \\ 
    \label{EL:ineq:on:R}
    K_\alpha * \orho + Q_\alpha 
    &\geq F^\alpha
    &&\text{on } \R,
  \end{align}
  \end{subequations}
  where $F^\alpha$ is the potential value defined in \eqref{Fan:Fa}.
\end{itemize} 
We recall that, since the dependence of $\orho$ on $\alpha$ is obvious, we do not explicitly adopt this dependence in the notation.

In the remainder of Section \ref{s:pf:t} we treat $\orho$ as a function from $\R$ to $[0,\infty)$ and restrict the domain of $E^\alpha$ from measures to functions. As a result, we write
\[
  E^\alpha (\rho) 
  = \frac12 \int_\R (K_\alpha * \rho) \rho + \int_\R Q_\alpha \rho.
\]

\subsection{Uniform bounds on $\orho$}
\label{s:pf:t:orho}

While the properties of $\orho$ established in \cite{KimuraVanMeurs20DOI} and listed in Section \ref{s:pf:t:prelim} are essential for proving Theorem \ref{t}, they are not sufficient for our proof. In particular, these properties give no control on the related values of $y_1^\alpha, y_2^\alpha, C_\alpha, E^\alpha(\orho)$ for large $\alpha$. To find sufficient uniform bounds on these and related quantities, we apply and develop different arguments.

We start with two auxiliary lemmas: 

\begin{lem}[Lower bound on the interaction energy] \label{l:LB:Eint}
For any $\rho \in L^1(\R) \cap \cM_+(\R)$ and any $z_1 < z_2$, we have for all $\alpha > 0$
\[
  \int_\R (K_\alpha * \rho) \rho 
  \geq \frac{K(1)}{z_2 - z_1} \bigg( \int_{z_1}^{z_2} \rho \bigg)^2.
\]
\end{lem}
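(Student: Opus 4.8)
The statement to prove is a clean lower bound on the interaction energy $\int_\R (K_\alpha * \rho)\rho$ for a nonnegative integrable measure $\rho$, in terms of the mass $\int_{z_1}^{z_2}\rho$ it places on an arbitrary interval $[z_1,z_2]$. The plan is to exploit only monotonicity of $K$ on $(0,\infty)$ and evenness, plus the precise scaling of $K_\alpha$.

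First I would reduce to the restriction $\rho_0 := \rho\,\chi_{[z_1,z_2]}$: since the integrand $(K_\alpha*\rho)\rho$ is nonnegative (as $K_\alpha\ge 0$ and $\rho\ge 0$) and since $K_\alpha*\rho \ge K_\alpha*\rho_0$ pointwise (again because $K_\alpha\ge0$ and $\rho\ge\rho_0$), we get
\[
  \int_\R (K_\alpha*\rho)\rho \;\ge\; \int_\R (K_\alpha*\rho_0)\rho_0 \;=\; \int_{z_1}^{z_2}\int_{z_1}^{z_2} K_\alpha(x-y)\,d\rho(y)\,d\rho(x).
\]
So it suffices to bound the double integral over $[z_1,z_2]^2$ from below. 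The second step is the key pointwise estimate: for $x,y\in[z_1,z_2]$ we have $|x-y|\le z_2-z_1$, and by evenness together with monotonicity of $K$ on $(0,\infty)$, $K(x-y)=K(|x-y|)\ge K(z_2-z_1)$. Then from $K_\alpha(x-y)=\alpha K(\alpha(x-y))\ge \alpha K(\alpha(z_2-z_1))$ — wait, this is where care is needed: $\alpha K(\alpha t)$ is \emph{not} monotone in $t$ in a scale-invariant way, and for $\alpha$ small or the interval short, $\alpha(z_2-z_1)$ may be tiny, making $K(\alpha(z_2-z_1))$ large, which is fine (a large lower bound), but for $\alpha(z_2-z_1)$ large it could be small.

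So the cleaner route, and the one matching the constant $K(1)/(z_2-z_1)$ in the statement, is to argue directly in the rescaled variable. Substitute $u=\alpha x$, $v=\alpha y$ and let $\tilde\rho$ be the pushforward, or equivalently note
\[
  \int_{z_1}^{z_2}\int_{z_1}^{z_2} K_\alpha(x-y)\,d\rho(y)\,d\rho(x)
  = \int_{\alpha z_1}^{\alpha z_2}\int_{\alpha z_1}^{\alpha z_2} \alpha K(u-v)\,\tfrac1\alpha\, d\rho_\#(v)\, d\rho_\#(u),
\]
hmm — the algebra should be done honestly but the upshot is: the problem is scale-covariant, so WLOG reduce to $\alpha=1$ by rescaling the interval to $[\alpha z_1,\alpha z_2]$, and then I need $K$ on an interval of length $\ell:=\alpha(z_2-z_1)$ bounded below by something like $K(\ell)$ — but that is still not $K(1)$. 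The resolution: chop $[z_1,z_2]$ into $\lceil \ell\rceil$ subintervals each of length $\le 1/\alpha$ (so diameter $\le 1$ after rescaling), apply $K\ge K(1)$ on each diagonal block by monotonicity, use Cauchy--Schwarz to compare $\sum_k m_k^2$ with $(\sum_k m_k)^2/(\#\text{blocks})$ where $m_k$ are the sub-masses, and track that $\#\text{blocks}\le \alpha(z_2-z_1)$ while each block contributes a factor $\alpha K(1)$ from $K_\alpha$. Combining, $\alpha K(1)\cdot \sum_k m_k^2 \ge \alpha K(1)\cdot (\int_{z_1}^{z_2}\rho)^2/(\alpha(z_2-z_1)) = K(1)(\int_{z_1}^{z_2}\rho)^2/(z_2-z_1)$, as desired.

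The main obstacle is getting this counting/Cauchy--Schwarz step right with the off-diagonal blocks: I am discarding off-diagonal contributions (legitimate since the integrand is nonnegative), keeping only the $\#\text{blocks}$ diagonal blocks, on each of which $K_\alpha(x-y)\ge \alpha K(1)$ since $|x-y|\le 1/\alpha$ forces $\alpha|x-y|\le 1$ and $K$ is decreasing on $(0,\infty)$ with $K$ even; then $\int_{\text{block}_k}\int_{\text{block}_k}K_\alpha \ge \alpha K(1) m_k^2$. Summing and applying $\sum m_k^2 \ge (\sum m_k)^2/N$ with $N=\#\text{blocks}\le \lceil\alpha(z_2-z_1)\rceil$ finishes it — modulo checking $\lceil\alpha(z_2-z_1)\rceil$ versus $\alpha(z_2-z_1)$, which only helps when $\alpha(z_2-z_1)\ge 1$; for $\alpha(z_2-z_1)<1$ a single block with $N=1$ already gives $\alpha K(1)(\int\rho)^2 \ge K(1)(\int\rho)^2/(z_2-z_1)$ since then $\alpha\ge 1/(z_2-z_1)$. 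That case split is the one routine wrinkle to handle cleanly.
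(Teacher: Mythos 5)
Your argument takes essentially the same route as the paper: decompose $[z_1,z_2]$ into $N$ subintervals each of length at most $1/\alpha$, keep only the diagonal blocks where $K_\alpha(x-y)\geq\alpha K(1)$ (since there $\alpha|x-y|\leq1$ and $K$ is even and decreasing on $(0,\infty)$), and apply $\sum_k m_k^2 \geq \left(\sum_k m_k\right)^2/N$ to the sub-masses. The reduction to $\rho\,\chi_{[z_1,z_2]}$ is also what the paper does implicitly by restricting the double integral to $[z_1,z_2]^2$.

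Your resolution of the case $\alpha(z_2-z_1)<1$, however, contains a sign error. You write that a single block with $N=1$ gives $\alpha K(1)\left(\int\rho\right)^2 \geq K(1)\left(\int\rho\right)^2/(z_2-z_1)$ ``since then $\alpha\geq 1/(z_2-z_1)$'' -- but $\alpha(z_2-z_1)<1$ is equivalent to $\alpha<1/(z_2-z_1)$, so the inequality runs the wrong way: the single-block bound is \emph{weaker} than the target, not stronger. This is not a slip you can patch; the lemma as stated actually fails in that regime. Taking $\rho=\chi_{[0,1]}$, $z_1=0$, $z_2=1$, the left-hand side equals $\int_{-\alpha}^{\alpha}(1-|u|/\alpha)\,K(u)\,du\leq\int_{-\alpha}^{\alpha}K(u)\,du\to0$ as $\alpha\to0$ (since $K\in L^1_{\mathrm{loc}}$), whereas the right-hand side is $K(1)>0$. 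For what it is worth, the paper's own handling of the non-integer case (``all steps above apply verbatim'' with $N_\alpha=\lceil\alpha(z_2-z_1)\rceil$) actually yields the weaker constant $\alpha K(1)/N_\alpha$, which equals the claimed $K(1)/(z_2-z_1)$ only when $\alpha(z_2-z_1)\in\N$; this does not matter downstream because the lemma is invoked only with $\alpha(z_2-z_1)$ bounded below and up to a multiplicative constant, but your proof as written must either exclude $\alpha(z_2-z_1)<1$ or settle for a degraded constant in the non-integer case.
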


\begin{proof}
Since $K \geq K(1) \chi_{[-1,1]} \geq 0$ (recall that $\chi$ is the indicator function defined in \eqref{chi}),
\begin{equation} \label{pf12}
  \int_\R (K_\alpha * \rho) \rho
  \geq \alpha K(1) \iint_{[z_1, z_2]^2} \chi_{[-1,1]} (\alpha( x-y)) \rho(y) \rho(x) \, dydx.
\end{equation}
We reduce the integration domain to the union of disjoint squares $S_i := [z_1 + \frac{i-1}\alpha, z_1 + \frac i\alpha)^2$ along the diagonal, where $1 \leq i \leq N_\alpha$ and $N_\alpha := \alpha (z_2 - z_1)$. We assume that
$
  N_\alpha \in \N
$
and treat the general case afterwards. Note that $\chi_{[-1,1]} (\alpha( x-y)) = 1$ for all $(x,y) \in S_i$ and each $i$. Then, for the integral in the right-hand side of \eqref{pf12}, we estimate
\begin{multline*}
  \iint_{[z_1, z_2]^2} \chi_{[-1,1]} (\alpha( x-y)) \rho(y) \rho(x) \, dydx
  \geq \sum_{i=1}^{N_\alpha} \iint_{S_i} \rho(y) \rho(x) \, dydx \\
  = \sum_{i=1}^{N_\alpha} \bigg( \int_{z_1 + \frac{i-1}\alpha}^{z_1 + \frac i\alpha} \rho \bigg)^2
  \geq \frac1{N_\alpha} \bigg( \sum_{i=1}^{N_\alpha}  \int_{z_1 + \frac{i-1}\alpha}^{z_1 + \frac i\alpha} \rho \bigg)^2
  = \frac1\alpha \frac1{z_2 - z_1} \bigg( \int_{z_1}^{z_2} \rho \bigg)^2.
\end{multline*}
This proves Lemma \ref{l:LB:Eint} for when $
  N_\alpha = \alpha (z_2 - z_1) \in \N
$. 

If $\alpha (z_2 - z_1) \notin \N$, then the argument above holds with the following minor modification. Set 
$N_\alpha := \lceil \alpha (z_2 - z_1) \rceil$, and redefine the last square as
\[
  S_{N_\alpha} := \Big[ z_1 + \frac{N_\alpha - 1}\alpha, z_2 \Big)^2.
\]
Note that the size of $S_{N_\alpha}$ is less then $\frac1\alpha$. Then, all steps above apply verbatim.
\end{proof}

\begin{lem}[Lower bound on the interaction energy] \label{l:LB:Qa}
For all $\alpha > q_2$ and all $|x| \geq 1$ there holds
\begin{equation*} 
  Q_\alpha(x) \geq 2 |x|. 
\end{equation*}
\end{lem}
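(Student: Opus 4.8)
The plan is to exploit the normalization \eqref{Qa:normzn} together with the convexity of $Q$ to get a pointwise lower bound on $Q_\alpha$ for $|x| \geq 1$. First I would record the two facts that drive the estimate: (a) $\int_0^1 Q_\alpha = 1$ by \eqref{Qa:normzn}, and (b) $Q_\alpha(x) = \frac{\alpha}{P(\alpha)} Q(\alpha x)$ is, as a function of $x$, convex on $[0,\infty)$ and equal to $0$ at $x = 0$ (since $Q \geq 0$ with $Q(0) = 0$ and $Q$ convex). From (b), the map $x \mapsto Q_\alpha(x)/x$ is nondecreasing on $(0,\infty)$; call its slope-type behaviour the key monotonicity. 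Combining this with (a) gives a lower bound on the average slope on $[0,1]$: since $Q_\alpha(x) \leq Q_\alpha(1) x$ for $x \in [0,1]$ by convexity through the origin, $1 = \int_0^1 Q_\alpha \leq Q_\alpha(1) \int_0^1 x\,dx = \tfrac12 Q_\alpha(1)$, hence $Q_\alpha(1) \geq 2$.

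Next I would propagate this to all $x \geq 1$: by the monotonicity of $x \mapsto Q_\alpha(x)/x$ we get $Q_\alpha(x) \geq Q_\alpha(1)\, x \geq 2x$ for all $x \geq 1$. For $x \leq -1$ the statement concerns $|x| \geq 1$, so I need the analogous bound on the negative half-line; here I would use that $Q$ is convex on all of $\R$ with minimum $0$ at $x=0$, so $x \mapsto Q_\alpha(x)/x$ is also monotone (nonincreasing) on $(-\infty,0)$, and I would need the corresponding normalization fact on $[-1,0]$. The subtlety is that \eqref{Qa:normzn} only normalizes the integral over $[0,1]$; however, assumption \eqref{Pinv:ass:al} was precisely arranged (via the $x \mapsto -x$ reduction) so that the mass sits preferentially on the positive side, and one has $P(\alpha) \geq -P^{-1}(-P(\alpha))$-type control, which forces $\int_{-1}^0 Q_\alpha \geq $ the corresponding quantity. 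Concretely, $\int_{-1}^0 Q_\alpha(x)\,dx = \frac{1}{P(\alpha)}\int_{-\alpha}^0 Q(z)\,dz = \frac{-P(-\alpha)}{P(\alpha)} \geq 1$ exactly when $-P(-\alpha) \geq P(\alpha)$, i.e. $-\alpha \leq P^{-1}(-P(\alpha))$, which rearranges to \eqref{Pinv:ass:al}. So the negative side gives $\int_{-1}^0 Q_\alpha \geq 1$, and then the same convexity-through-the-origin argument yields $Q_\alpha(-1) \geq 2$ and hence $Q_\alpha(x) \geq 2|x|$ for $x \leq -1$.

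The main obstacle is the negative half-line: one must check that \eqref{Pinv:ass:al} really does deliver $\int_{-1}^0 Q_\alpha \geq 1$ (equivalently $-P(-\alpha) \geq P(\alpha)$), using that $P$ is concave on $(-\infty,0]$, convex on $[0,\infty)$, and strictly increasing off $[q_1,q_2]$ so that $P^{-1}$ is well-defined and order-preserving. Once that inequality is in hand, everything else is the elementary "convex function vanishing at the origin has nondecreasing difference quotient" argument applied on each half-line. I would also double-check the edge case where $q_1 < 0 < q_2$ or where $q_2 < 1 \leq \alpha$: since we assume $\alpha > q_2$, the point $x = 1$ satisfies $\alpha x = \alpha > q_2 \geq 0$ only if $1 > q_2/\alpha$, which holds as $\alpha > q_2$ forces $q_2/\alpha < 1$; but if $q_2 \geq 1$ the bound $Q_\alpha(1) \geq 2$ could fail, so the hypothesis implicitly needs $q_2$ small relative to the range of $x$ — this is consistent with the standing assumption $\alpha > \max\{c, q_2\}$ and the fact that the lemma is applied with $\alpha$ large, so I would note that the claimed inequality is really only needed and used for $|x| \geq 1$ with $\alpha$ in the relevant regime, and the constant $2$ is not sharp.
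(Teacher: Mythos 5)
Your argument is essentially identical to the paper's proof: both bound $Q_\alpha(1)$ from below via the chord inequality $Q_\alpha(x)\leq Q_\alpha(1)x$ on $[0,1]$ combined with the normalization \eqref{Qa:normzn}, propagate via the nondecreasing difference quotient for $x\geq 1$, and handle $x\leq -1$ via $\int_{-1}^0 Q_\alpha = -P(-\alpha)/P(\alpha)\geq 1$, which is exactly \eqref{Pinv:ass:al}. Your closing worry about the case $q_2\geq 1$ is unfounded: the normalization $\int_0^1 Q_\alpha = 1$ holds for every $\alpha>q_2$ regardless of the size of $q_2$, and the chord through $(0,0)$ and $(1,Q_\alpha(1))$ still dominates the convex $Q_\alpha$ on $[0,1]$, so $Q_\alpha(1)\geq 2$ follows with no additional hypothesis on $q_2$.
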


\begin{proof} Note that the line parametrized by $x \mapsto Q_\alpha(1) x$ intersects the graph of $Q_\alpha$ at $x = 0$ and $x = 1$. Then, since $Q_\alpha$ is convex, we obtain $Q_\alpha(x) \leq Q_\alpha(1) x$ for all $0 \leq x \leq 1$ and $Q_\alpha(x) \geq Q_\alpha(1) x$ for all $x \geq 1$. Recalling \eqref{Qa:normzn}, we have
\[
  1 = \int_0^1 Q_\alpha(x) \, dx \leq \int_0^1 Q_\alpha(1) x \, dx = \frac12 Q_\alpha(1).
\]
Thus, for $x \geq 1$ we have $Q_\alpha(x) \geq Q_\alpha(1) x \geq 2x$; Lemma \ref{l:LB:Qa} follows for $x \geq 1$. Similarly, we obtain
\[
  Q_\alpha(-1) 
  = 2 \int_{-1}^0 Q_\alpha(-1) x \, dx
  \geq 2 \int_{-1}^0 Q_\alpha(x) \, dx
  = 2\frac{-P(-\alpha)}{P(\alpha)}
  \geq 2, 
\] 
where the last inequality follows from \eqref{Pinv:ass:al}. Then, for $x \leq -1$, we have $Q_\alpha(-x) \geq Q_\alpha(-1) |x| \geq 2|x|$. This completes the proof.
\end{proof}

Next we use the auxiliary Lemmas \ref{l:LB:Eint} and \ref{l:LB:Qa} to establish uniform bounds on the energies.

\begin{lem}[Bounds on $E_n^\alpha(\obx), E^\alpha(\orho)$] \label{l:Ea:bds}
There exists a $C > 1$ such that for all $n \geq 2$ and all $\alpha > q_2$
\[
  E_n^\alpha(\obx), E^\alpha(\orho) \leq C
\]
and
\[
   E^\alpha(\orho) \geq \frac1C.
\]
\end{lem}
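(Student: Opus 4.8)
The goal is to establish the three-sided bound $\tfrac1C \le E^\alpha(\orho) \le C$ together with $E_n^\alpha(\obx) \le C$, uniformly in $n \ge 2$ and $\alpha > q_2$. The strategy is the standard one: for the upper bounds, insert a well-chosen competitor into the variational problem; for the lower bound on $E^\alpha(\orho)$, use the two auxiliary lemmas to show that no probability density can make both parts of $E^\alpha$ simultaneously small.

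\emph{Upper bounds.} For $E^\alpha(\orho)$, I would test $E^\alpha$ against the competitor $\rho = \chi_{[0,1]}$ used in the derivation of $\alpha$ and $\gamma$ in Section~\ref{s:main:prel}. By minimality, $E^\alpha(\orho) \le E^\alpha(\chi_{[0,1]})$. The confinement part is exactly $\int_0^1 Q_\alpha = 1$ by the normalization \eqref{Qa:normzn}. For the interaction part, $\tfrac12\int_0^1\int_0^1 K_\alpha(x-y)\,dy\,dx \le \tfrac12 \int_{-1}^1 K_\alpha = \tfrac12\int_{-\alpha}^\alpha K \le \tfrac12 \int_\R K$, and the last integral is finite and independent of $\alpha$ (the exponential tails and logarithmic singularity of $K$ are both integrable); this uses $\int_\R K_\alpha = \int_\R K$. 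Hence $E^\alpha(\orho) \le C$. For $E_n^\alpha(\obx)$, I would similarly test $E_n^\alpha$ against an explicit configuration $\bx \in \Omega_n$ — e.g.\ the $n$ equispaced points $x_i = (i - \tfrac12)/n$ in $[0,1]$ — and bound the double sum by comparison with the integral $\int_0^1\int_0^1 K_\alpha$ plus the diagonal-removed correction. Here one must be slightly careful because $K_\alpha$ is singular; the near-diagonal terms $\sum_{j \ne i} K_\alpha(x_i - x_j)$ are controlled by $\sum_{k\ge1} K_\alpha(k/n)$, and since $K_\alpha(x) = \alpha K(\alpha x) \le C_1 \alpha \log(1/|\alpha x|)$ for $|\alpha x| \le \tfrac12$ and is exponentially small otherwise, the prefactor $\tfrac1{2n(n-1)}$ renders this $O(1)$ uniformly — this is the one place requiring genuine (if routine) estimation. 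The confinement part is $\tfrac1n \sum_i Q_\alpha(x_i) \le \sup_{[0,1]} Q_\alpha \le C$ by convexity of $Q_\alpha$ and \eqref{Qa:normzn}.

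\emph{Lower bound.} This is where I expect the only real (though still mild) obstacle. I would argue by contradiction-free dichotomy on how much mass of $\orho$ lies in a fixed bounded interval. Fix a large constant $R \ge 1$. Either $\int_{-R}^R \orho \ge \tfrac12$, in which case Lemma~\ref{l:LB:Eint} with $z_1 = -R$, $z_2 = R$ gives $\int_\R (K_\alpha * \orho)\orho \ge \tfrac{K(1)}{2R}\cdot\tfrac14$, so the interaction part alone bounds $E^\alpha(\orho)$ from below by a positive constant; or $\int_{|x|\ge R}\orho \ge \tfrac12$, in which case Lemma~\ref{l:LB:Qa} gives $\int_\R Q_\alpha \orho \ge \int_{|x|\ge R} 2|x|\,\orho \ge 2R \cdot \tfrac12 = R \ge 1$, so the confinement part bounds $E^\alpha(\orho)$ from below. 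In both branches $E^\alpha(\orho) \ge c > 0$ with $c$ depending only on $R$ and $K(1)$, hence on nothing relevant. Taking, say, $R = 1$ already suffices. The main thing to watch is that Lemma~\ref{l:LB:Qa} requires $|x| \ge 1$ and $\alpha > q_2$, which is exactly our standing hypothesis, and that Lemma~\ref{l:LB:Eint} applies to $\orho \in L^1(\R) \cap \cM_+(\R)$, which holds by the properties of $\orho$ recalled in Section~\ref{s:pf:t:prelim}.
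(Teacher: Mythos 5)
Your proposal is correct and follows essentially the same route as the paper: the same competitor $\chi_{[0,1]}$ and normalization \eqref{Qa:normzn} for the upper bound on $E^\alpha(\orho)$, a regular grid on $[0,1]$ for the upper bound on $E_n^\alpha(\obx)$, and a split of the mass of $\orho$ across $[-1,1]$ combined with Lemmas~\ref{l:LB:Eint} and~\ref{l:LB:Qa} for the lower bound. The only cosmetic differences are that the paper recognizes the discrete sums as lower Riemann sums (using monotonicity of $K$ and $Q_\alpha$ and $Q_\alpha(0)=0$) rather than estimating the near-diagonal terms directly, and it sets $m:=\int_{[-1,1]}\orho$ and minimizes $\tfrac{K(1)}{4}m^2 + 2(1-m)$ over $m\in[0,1]$ instead of invoking the dichotomy $m\ge\tfrac12$ or $1-m\ge\tfrac12$; both variants yield the same uniform constants.
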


\begin{proof}
Let $\rho := \chi_{[0,1]} \in \cP(\R)$ and note that 
\[
  \| K_\alpha * \rho \|_\infty
  \leq \| \rho \|_\infty \int_\R K_\alpha
  = \int_\R K.
\]
Then, by the minimality of $\orho$ and \eqref{Qa:normzn},
\[
  E^\alpha(\orho)
  \leq E^\alpha(\rho)
  \leq \frac12 \| K_\alpha * \rho \|_\infty \int_\R \rho + \int_0^1 Q_\alpha \leq C.
\]
Similarly, for $E_n^\alpha(\obx)$ we take $\bx$ defined by $x_i = \frac{i-1}n$ for $i = 1, \ldots,n$. Then, 
\begin{align*}  
  E_n^\alpha(\obx)
  \leq E_n^\alpha(\bx)
  = \frac1{2 n (n-1)} \sum_{i =1}^n \sum_{ \substack{ j = 1 \\ j \neq i }}^n K_\alpha \Big( \frac{i-j}n \Big) +  \frac1n \sum_{i=1}^n Q_\alpha \Big( \frac{i-1}n \Big).
\end{align*}
Recalling that $K_\alpha$ is even and that $K_\alpha > 0$ and $Q_\alpha(0) = 0$, we estimate
\begin{align*}  
  E_n^\alpha(\obx)
  \leq \frac1{n-1} \sum_{i =1}^n \sum_{ k=1 }^{n-1} \frac1n K_\alpha \Big( \frac{k}n \Big) + \sum_{j=1}^{n-1} \frac1n Q_\alpha \Big( \frac jn \Big).
\end{align*}
From the monotonicity properties of $Q$ and $K$ we recognize the sums above as lower Riemann sums of integrals over $K_\alpha$ and $Q_\alpha$. Using this, we obtain
\begin{align*}  
  E_n^\alpha(\obx)
  \leq \frac n{n-1} \int_0^1 K_\alpha + \int_0^1 Q_\alpha 
  \leq C.
\end{align*}

To prove the lower bound in Lemma \ref{l:Ea:bds}, let
\begin{equation*} 
  \orho_\text{in} := \orho |_{[-1, 1]}
  \quad \text{and} \quad
  \orho_\text{out} := \orho - \orho_\text{in} = \orho |_{\R \setminus [-1, 1]}.
\end{equation*}
Set $m := \int_\R \orho_\text{in} \in [0,1]$, and note that $\int_\R \orho_\text{out} = 1-m$. 
Then, by Lemma \ref{l:LB:Qa}
\[
  \int_\R Q_\alpha \orho
  \geq \int_\R Q_\alpha \orho_\text{out}
  \geq \int_\R 2 \orho_\text{out} = 2(1 - m),
\]
and by Lemma \ref{l:LB:Eint}
\[
  \int_\R (K_\alpha * \orho) \, \orho 
  \geq \int_\R (K_\alpha * \orho_\text{in}) \, \orho_\text{in} 
  \geq \frac{K(1)}2 \bigg( \int_{-1}^{1} \orho_\text{in} \bigg)^2
  = \frac{K(1)}2 m^2.
\]
Hence,
\[
  E^\alpha(\orho) 
  \geq \frac{K(1)}4 m^2 + 2(1-m)
  \geq c > 0 
\]
for some $c$ independent of $m$.
\end{proof}

For the following lemma, we recall that $\supp \orho = [y_1^\alpha, y_2^\alpha]$.

\begin{lem}[Bounds on $\supp \orho$] \label{l:orhoa:supp}
There exist $C,c > 0$ such that 
\[
  - C 
  \leq y_1^\alpha 
  \leq 0
  \leq y_2^\alpha
  \leq C
  \quad \text{ and } \quad
  y_2^\alpha - y_1^\alpha > c
  \quad \text{for all } \alpha > q_2.
\]
Moreover, there exists a $C > 0$ such that 
\begin{equation} \label{l:orhoa:supp:Qn:UB} 
  \| Q_\alpha \|_{L^\infty( y_1^\alpha, y_2^\alpha )} \leq C
  \quad \text{for all } \alpha > q_2.
\end{equation}
\end{lem}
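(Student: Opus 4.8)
The strategy is to extract all four bounds from the Euler--Lagrange system \eqref{EL} together with the a priori energy bounds of Lemma \ref{l:Ea:bds} and the lower bound $Q_\alpha(x) \ge 2|x|$ of Lemma \ref{l:LB:Qa}. First I would handle the \emph{upper} bounds $y_1^\alpha \le 0 \le y_2^\alpha$: these follow from the variational characterization, since $Q_\alpha$ is minimal at $0$ and shifting mass toward $0$ can only decrease the confinement while the interaction term is translation invariant, so the support must contain $0$ (alternatively, symmetrize and use convexity of $E^\alpha$). For the \emph{two-sided} bounds on the endpoints, I would test the Euler--Lagrange equality \eqref{EL:eq} at a point near an endpoint, or more robustly argue by contradiction: if, say, $y_2^\alpha$ were very large, then by Lemma \ref{l:LB:Qa} we would have $\int_\R Q_\alpha \orho \gtrsim y_2^\alpha$ (using that $\orho$ must carry nonnegligible mass near its right endpoint because $\orho$ is bounded above by $C_\alpha\sqrt{(x-y_1^\alpha)(y_2^\alpha-x)}$ and has total mass $1$ — so it cannot all be concentrated near $0$ if the support is long), contradicting $E^\alpha(\orho) \le C$ from Lemma \ref{l:Ea:bds}. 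A clean way to make this rigorous: integrate \eqref{EL:eq} against $\orho$ to get $2E^\alpha(\orho) - \int Q_\alpha\orho = F^\alpha \ge 0$, hence $\int Q_\alpha \orho \le 2E^\alpha(\orho) \le 2C$; then by Lemma \ref{l:LB:Qa}, $\int_{|x|\ge 1} 2|x|\,\orho \le 2C$, so $\int_{|x|\ge R}\orho \le C/R$, which forces at least mass $\tfrac12$ to sit in $[-2C,2C]$ — call this interval $[-R_0,R_0]$. If $y_2^\alpha$ exceeded, say, $R_0+1$, then $\orho$ would need to place mass on $[R_0, y_2^\alpha]$ (the support is an interval containing $0$, and $\orho>0$ on the interior), but I still need a quantitative lower bound on that mass; for this I would invoke the Euler--Lagrange \emph{equality} at the endpoint together with the inequality \eqref{EL:ineq:on:R} just outside, comparing $K_\alpha*\orho + Q_\alpha$ at a far point versus at $0$, and use that $K_\alpha*\orho$ is bounded (since $\|K_\alpha*\orho\|_\infty \le \|\orho\|_1\,\dots$ — actually one needs a bound like $K_\alpha * \orho \le \int K$ which requires $\|\orho\|_\infty$ control, so I would instead use the cruder $K_\alpha * \orho \ge 0$ and $K_\alpha*\orho(0) \le $ something finite).

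\textbf{Cleaner route via the potential.} Rather than bounding $\|\orho\|_\infty$ first, I would use \eqref{EL:ineq:on:R} at $x = 0$: $(K_\alpha * \orho)(0) + Q_\alpha(0) \ge F^\alpha$, i.e.\ $(K_\alpha*\orho)(0) \ge F^\alpha \ge c > 0$ by Lemma \ref{l:Ea:bds} and $Q_\alpha(0)=0$. Simultaneously, for $x$ in the support, $Q_\alpha(x) \le F^\alpha \le E^\alpha(\orho) \le C$ by \eqref{EL:eq} and $K_\alpha*\orho \ge 0$ — this already gives \eqref{l:orhoa:supp:Qn:UB} once we know the support is bounded, and conversely, since $Q_\alpha(x) \ge 2|x|$ for $|x|\ge 1$ (Lemma \ref{l:LB:Qa}), $Q_\alpha(x) \le C$ on the support forces $|x| \le C/2$ there. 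So $\mathbf{Q_\alpha(x) \le F^\alpha \le C}$ \emph{on the support} is the single key inequality: it is immediate from \eqref{EL:eq} plus $K_\alpha * \orho \ge 0$ plus $F^\alpha \le E^\alpha(\orho) \le C$, and it yields at once both $y_2^\alpha \le C$, $y_1^\alpha \ge -C$ (via Lemma \ref{l:LB:Qa}) and the bound \eqref{l:orhoa:supp:Qn:UB}.

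\textbf{Lower bound on the support length.} The remaining claim $y_2^\alpha - y_1^\alpha > c$ is where I expect the only real work. Suppose $y_2^\alpha - y_1^\alpha =: \ell$ is small. Since $\orho$ has mass $1$ supported on an interval of length $\ell$ and (by the listed property) $\orho(x) \le C_\alpha \sqrt{(x-y_1^\alpha)(y_2^\alpha-x)} \le C_\alpha \ell/2$, we would need $C_\alpha \gtrsim 1/\ell^2$, but $C_\alpha$ is not controlled uniformly, so this is not directly usable. Instead I would estimate the interaction energy from below: by Lemma \ref{l:LB:Eint} applied with $[z_1,z_2] = [y_1^\alpha,y_2^\alpha]$, $\int_\R (K_\alpha * \orho)\orho \ge \frac{K(1)}{\ell}(\int_{y_1^\alpha}^{y_2^\alpha}\orho)^2 = \frac{K(1)}{\ell}$, which blows up as $\ell \to 0$, contradicting $E^\alpha(\orho) \le C$ from Lemma \ref{l:Ea:bds}. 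Hence $\ell \ge K(1)/(2C) =: c > 0$. This is in fact clean and requires no control on $C_\alpha$. So the ``main obstacle'' essentially dissolves: Lemma \ref{l:LB:Eint} is exactly the tool that makes the short-support case impossible, and Lemmas \ref{l:LB:Qa}, \ref{l:Ea:bds} plus the Euler--Lagrange relation dispatch the long-support case and \eqref{l:orhoa:supp:Qn:UB}. The only point demanding care is the sign/boundedness bookkeeping $F^\alpha \le E^\alpha(\orho)$, which follows from \eqref{Fan:Fa} and $\frac12\int Q_\alpha\,d\orho \ge 0$, i.e.\ $F^\alpha = E^\alpha(\orho) - \frac12\int Q_\alpha\,d\orho \le E^\alpha(\orho)$; and $F^\alpha \ge \frac12 E^\alpha(\orho) \ge c > 0$, which combined with $\|K_\alpha*\orho\|$ estimates is not even needed for this lemma but confirms consistency.
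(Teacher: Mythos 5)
Your proposal is correct, and your ``cleaner route'' for the boundedness of $\supp\orho$ and for \eqref{l:orhoa:supp:Qn:UB} is genuinely different from -- and shorter than -- the argument in the paper. Both treatments handle the two easy parts identically: $0 \in [y_1^\alpha, y_2^\alpha]$ follows from the translation argument (if not, shift $\orho$ toward $0$; the interaction is translation-invariant and $Q_\alpha$ only decreases, contradicting uniqueness of the minimizer), and $y_2^\alpha - y_1^\alpha \ge c$ follows by applying Lemma~\ref{l:LB:Eint} on $[y_1^\alpha,y_2^\alpha]$ together with the a priori upper bound on $E^\alpha(\orho)$ from Lemma~\ref{l:Ea:bds}. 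The difference is in the endpoint bounds and \eqref{l:orhoa:supp:Qn:UB}. The paper proves these by contradiction: it sets $J := \{Q_\alpha \le 2(E^\alpha(\orho)+1)\}$, supposes $[y_1^\alpha,y_2^\alpha] \not\subset J$, builds the competitor $\rho = \tfrac1{\overline m}\,\orho|_J$ with $\overline m := \int_J \orho$, shows $\overline m > \tfrac12$, and derives $E^\alpha(\rho) < E^\alpha(\orho)$, a contradiction. You instead read the bound directly off the Euler--Lagrange equality \eqref{EL:eq}: on the support, $Q_\alpha = F^\alpha - K_\alpha*\orho \le F^\alpha$ since $K_\alpha*\orho \ge 0$, and $F^\alpha = E^\alpha(\orho) - \tfrac12\int Q_\alpha\,d\orho \le E^\alpha(\orho) \le C$; combined with Lemma~\ref{l:LB:Qa} this bounds both $Q_\alpha$ on the support and the endpoints in one stroke. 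The trade-off is conceptual: the paper's competitor argument is a self-contained, purely variational technique (it uses no first-order conditions, and is the standard tool in the equilibrium-measure literature it cites), whereas your argument is more economical but leans on the Euler--Lagrange system \eqref{EL}, which is indeed listed among the available properties of $\orho$ from \cite{KimuraVanMeurs20DOI}, so there is no circularity. One small bookkeeping note: whether the Lagrange multiplier in \eqref{EL:eq} is $F^\alpha$ or a fixed multiple of it is irrelevant to your argument -- all that is used is that it is bounded above by a constant multiple of $E^\alpha(\orho)$.
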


\begin{proof}
We simplify notation by setting $y_i = y_i^\alpha$ ($i=1,2$). 

It is easy to see that $0 \in [y_1, y_2]$. Indeed, if this is not the case, then by the monotonicity of $Q$ on either side of $0$ it follows that $E^\alpha$ is non-increasing along a translation of $\orho$ towards $0$; this contradicts the uniqueness of the minimizer $\orho$.

Next we prove $y_2 - y_1 \geq c$. Using Lemma \ref{l:LB:Eint}, we obtain
\begin{equation*}
  E^\alpha(\orho)
  \geq \frac{K(1)}{2(y_2 - y_1)} \bigg( \int_{y_1}^{y_2} d\orho \bigg)^2
  = \frac{K(1)}{2(y_2 - y_1)}.
\end{equation*}
Then, by Lemma \ref{l:Ea:bds}
\[
  y_2 - y_1
  \geq \frac {K(1)}{2 E^\alpha(\orho)} \geq c > 0.
\]

Finally, we claim that
\begin{equation} \label{pf15} 
  [y_1, y_2] \subset \{ Q_\alpha \leq 2 (E^\alpha(\orho) + 1) \} =: J,
\end{equation} 
where $J$ is a sublevel set of $Q_\alpha$. Given this claim, \eqref{l:orhoa:supp:Qn:UB} follows from $E^\alpha(\orho) \leq C$. Moreover, from the convexity and the lower bound on $Q_\alpha$ in Lemma \ref{l:LB:Qa} it follows that $J$ is a closed interval which is uniformly bounded in $\alpha$. Hence, given that \eqref{pf15} holds, $y_1$ and $y_2$ are uniformly bounded, which completes the proof of Lemma \ref{l:orhoa:supp}.

It is left to prove the claim \eqref{pf15}. We follow the proof by contradiction of \cite[Section 3.2]{MoraRondiScardia19} and \cite[Proposition 3.1]{KimuraVanMeurs21} with modifications. Suppose $[y_1, y_2] \not \subset J$. We are going to reach a contradiction by constructing a $\rho \in L^1(\R) \cap \cP(\R)$ such that $E^\alpha(\rho) < E^\alpha(\orho)$. In preparation for this, take $\ov m := \int_{J} \orho < 1$. To see that $\ov m > 0$, we rely on Lemma \ref{l:LB:Qa} and \eqref{pf15} to estimate
\begin{align*}
  E^\alpha(\orho)
  \geq \int_{J^c} Q_\alpha \orho
  >  2 (E^\alpha(\orho) + 1) \int_{J^c} \orho
  = 2 (E^\alpha(\orho) + 1) (1 - \ov m).
\end{align*} 
Hence, $\ov m > \frac12 > 0$. This allows us to define
\[
  \rho := \frac1{\ov m} \orho |_J \in L^1(\R) \cap \cP(\R).
\]
We rely on \eqref{pf15} to obtain
\begin{align*}
  E^\alpha(\orho)
  &= \frac12 \iint_{\R^2 \setminus J^2} \big[ K_\alpha(x-y) + Q_\alpha(x) + Q_\alpha(y) \big] \orho(y) \orho(x) \, dydx \\
  &\qquad + \frac12 \iint_{J^2} \big[ K_\alpha(x-y) + Q_\alpha(x) + Q_\alpha(y) \big] \ov m^2 \rho(y) \rho(x) \, dydx \\
  &\geq \frac12 \iint_{\R^2 \setminus J^2} 2 (E^\alpha(\orho) + 1) \orho(y) \orho(x) \, dydx + \ov m^2 E^\alpha(\rho) \\
  &= (1 - \ov m^2) (E^\alpha(\orho) + 1) + \ov m^2 E^\alpha(\rho).
\end{align*}
Rearranging terms, we obtain
\[
  \ov m^2 E^\alpha(\rho) 
  \leq \ov m^2 E^\alpha(\orho) + \ov m^2 - 1
  < \ov m^2 E^\alpha(\orho).
\]
\end{proof}

Finally we are ready to establish the key estimate in Proposition \ref{prop:orhoa:sup:bd}.

\begin{prop}[Upper bound on $\orho$] \label{prop:orhoa:sup:bd}
There exists a $C > 0$ such that 
\[
  \| \orho \|_\infty \leq C ( \| Q_\alpha'' \|_{L^\infty(\supp \orho)}  + 1) \quad
  \text{for all } \alpha > \max \{ q_2, 1 \}.
\]
\end{prop}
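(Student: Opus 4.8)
The plan is to reduce the bound to a statement about $\orho$ near its maximum and there combine the Euler--Lagrange equation with the $C^2$-regularity of $\orho$ inside its support.

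\textbf{Step 1 (reduction and an a priori averaged bound).} I would first note that $\orho$, being continuous on $\R$, vanishing at the endpoints $y_1^\alpha,y_2^\alpha$ of its support and having mass $1$, attains its maximum at an \emph{interior} point $x_0\in(y_1^\alpha,y_2^\alpha)$, near which $\orho\in C^2$; write $T:=\orho(x_0)=\|\orho\|_\infty$. Next I would record a crude but uniform bound: since $K_\alpha$ is even and decreasing on $(0,\infty)$ one has $K_\alpha(z)\ge\alpha K(1)$ for $|z|\le1/\alpha$, so the EL identity $K_\alpha*\orho+Q_\alpha=F^\alpha$ on $\supp\orho$, together with $0\le K_\alpha*\orho\le F^\alpha\le E^\alpha(\orho)\le C$ (Lemma~\ref{l:Ea:bds}) and $\|Q_\alpha\|_{L^\infty(\supp\orho)}\le C$ (Lemma~\ref{l:orhoa:supp}), gives $\int_{B(x,1/\alpha)}\orho\le C/\alpha$ for every $x\in\supp\orho$, and hence $\int_{B(x,r)}\orho\le C\max\{r,1/\alpha\}$ for all $r>0$ by tiling. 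In particular the running average of $\orho$ over windows of length $\asymp1/\alpha$ is bounded uniformly in $\alpha$, so if $T$ is large then $\orho$ must drop from $T$ to $T/2$ within distance $\lesssim1/\alpha$ of $x_0$.

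\textbf{Step 2 (the key estimate --- and the main obstacle).} The heart of the proof is an $\alpha$-uniform control on how steeply $\orho$ can decay near $x_0$; morally this amounts to a lower bound $\orho''\ge-C(\|Q_\alpha''\|_{L^\infty(\supp\orho)}+1)$ at interior points of $\supp\orho$ staying away from its endpoints. I would extract this from the EL identity in second-differenced form: for $0<\e<\min\{x_0-y_1^\alpha,\,y_2^\alpha-x_0\}$,
\[
\int_\R\bigl[K_\alpha(t+\e)+K_\alpha(t-\e)-2K_\alpha(t)\bigr]\bigl[\orho(x-t)-\orho(x)\bigr]\,dt
=-\bigl[Q_\alpha(x+\e)+Q_\alpha(x-\e)-2Q_\alpha(x)\bigr],
\]
the right-hand side lying in $\bigl[-\|Q_\alpha''\|_{L^\infty(\supp\orho)}\e^2,0\bigr]$ by convexity of $Q_\alpha$. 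On the left, the kernel $t\mapsto K_\alpha(t+\e)+K_\alpha(t-\e)-2K_\alpha(t)$ is $\ge0$ away from $|t|=\e$ (convexity of $K$ off the origin), has mean zero, and has its singular contribution concentrated at scale $\e$; I would split the integral at $|t|=\e$, expand the near-diagonal part using $\orho\in C^2$ at $x$, integrate the far part by parts --- the boundary terms at $y_1^\alpha,y_2^\alpha$ vanish because $\orho$ does, and the surviving term $\int K_\alpha''\,\orho\ge0$ has a good sign --- and absorb the remaining tail of $K_\alpha''$ using its exponential decay and the averaged bound of Step~1. Letting $\e\to0$, or optimising over $\e\asymp1/\alpha$, and rearranging should then give the desired bound on $\orho''$. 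I expect this step to be the main obstacle: the logarithmic singularity of $K_\alpha$ lives precisely at scale $1/\alpha$, so the customary trick of treating $K-(-\log|\cdot|)$ as a regular perturbation would cost powers of $\alpha$, and it is exactly here that the hypothesis $Q\in C^4$ --- which makes $\orho$ of class $C^2$ on the interior of its support --- is needed.

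\textbf{Step 3 (conclusion).} With $M:=C(\|Q_\alpha''\|_{L^\infty(\supp\orho)}+1)$ and the lower bound $\orho''\ge-M$ near $x_0$, Taylor's theorem gives $\orho(x_0+s)\ge T-\tfrac M2 s^2$, so $\orho\ge T/2$ on an interval around $x_0$ of length $\min\{\sqrt{T/M},\,c\}$, using $y_2^\alpha-y_1^\alpha\ge c$ (Lemma~\ref{l:orhoa:supp}) to ensure room on at least one side of $x_0$; then $\int_\R\orho=1$ forces $T\le C'M$, which is the claim. Alternatively, the same Taylor estimate combined with the averaged bound of Step~1 yields the same conclusion.
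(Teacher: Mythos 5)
Your Step~2, which you yourself flag as ``the main obstacle'', is where the proposal falls short: as written it is a plan rather than a proof, and at least two issues are not resolved. First, a pointwise lower bound $\orho''\ge-C(\|Q_\alpha''\|_{L^\infty(\supp\orho)}+1)$ cannot hold up to the endpoints $y_1^\alpha,y_2^\alpha$: the regularity recalled in Section~\ref{s:pf:t:prelim} gives $\orho(x)\le C_\alpha\sqrt{(x-y_1^\alpha)(y_2^\alpha-x)}$, and generically $\orho$ has exactly square-root vanishing there, so $\orho''\to-\infty$ as $x\to y_i^\alpha$. You would therefore have to show that the maximizer $x_0$, together with the interval you Taylor-expand on, stays a fixed distance from the endpoints, which is not addressed. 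Second, the integration by parts on the ``far'' region is more delicate than you allow: $\orho'$ blows up at the endpoints, so the boundary terms do not evidently vanish, and the nonnegativity of $\int K_\alpha''\,\orho$ does not by itself close the estimate once the remaining singular and tail pieces are accounted for.

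The paper's proof takes a different, more economical route that avoids both difficulties. It does not attempt a pointwise bound on $\orho''$ at all. Instead it constructs a concave parabola $q(x)=\Gamma-\|\orho\|_\infty\big((x-m)/\ell\big)^2$ lying above $\orho$ and touching it at some interior point $x_*$; at a touching point $q$ and $\orho$ agree to first order and $q\ge\orho$, so the second-order Taylor remainder of $\orho$ at $x_*$ is dominated, for \emph{every} shift $z$, by the explicit remainder $-\|\orho\|_\infty z^2/\ell^2$ of the parabola. Feeding this global comparison into the integral representation of $(K_\alpha*\orho)''(x_*)$ from Lemma~\ref{l:convo:pp} (which is the limit-and-symmetrize form of your second-difference identity, with the boundary terms handled by the truncation argument in its proof) gives $(K_\alpha*\orho)''(x_*)\le -c\|\orho\|_\infty$ for $\alpha$ large, and the Euler--Lagrange equation then yields the claim directly. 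The touching-point device needs no control on where $x_*$ sits beyond interiority (only $c\le\ell\le C$ from Lemma~\ref{l:orhoa:supp} enters) and makes the mass-balance argument of your Steps~1 and~3 unnecessary. I would suggest replacing your Step~2 by this comparison-parabola construction; your Steps~1 and~3 can then be dropped.
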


The proof relies on the following technical lemma, which we prove in Appendix \ref{a:pf:l:convo}.

\begin{lem} \label{l:convo:pp}
Let $\alpha > 0$, $a < b$ and $f \in L^1(\R)$. If $f|_{(a,b)} \in C^2((a,b))$, then $K_\alpha * f$ is two times differentiable on $(a,b)$, and for all $x \in (a,b)$
\[
  (K_\alpha * f)''(x)
  = \int_{\R} \big[ f(x + z) - f(x) - z f'(x) \big] \, K_\alpha''(z) dz.
\]
\end{lem}
\comm{"$\int_\R$" is correct here. The convolution on the left is also over $\R$. Note that $x$, however, is in  $(a,b)$.}

To see that the integral in the right-hand side is well-defined despite the singularity of $K_\alpha''(z)$ of type $1/z^2$, note that the term inside the square brackets is the second-order remainder term of Taylor's Theorem applied to $f$ at $x$, which is $O(z^2)$.

\begin{proof}[Proof of Proposition \ref{prop:orhoa:sup:bd}] 
We simplify notation by setting $y_i = y_i^\alpha$ ($i=1,2$). The properties of $\orho$ listed in Section \ref{s:pf:t:prelim} imply Proposition \ref{prop:orhoa:sup:bd} for $\alpha \leq C$ for some constant $C > 0$. Hence, it remains to prove Proposition \ref{prop:orhoa:sup:bd} for all $\alpha$ large enough. \comm{It is here that we need $\alpha > \max \{q_2, 1\}$ in the case $q_2 = 0$. Also, the resulting constant $C$ in Proposition \ref{l:convo:pp} depends on the $C$ in $\alpha \leq C$. }

By the Euler-Lagrange equation \eqref{EL},
\begin{equation} \label{pf1}
  - \| Q_\alpha'' \|_{L^\infty(\supp \orho)} 
  \leq - Q_\alpha''(x) = (K_\alpha * \orho)''(x)
  \qquad \text{for all } x \in (y_1,y_2).
\end{equation}
Since $\orho \in C^2((y_1,y_2))$, Lemma \ref{l:convo:pp} applies. This yields
\begin{equation} \label{pf2}
  (K_\alpha * \orho)''(x)
  = \int_{\R} \big[ \orho(x + z) - \orho(x) - z \orho'(x) \big] \, K_\alpha''(z) dz
  \qquad \text{for all } x \in (y_1,y_2).
\end{equation}

Next we briefly sketch the remainder of the proof. While \eqref{pf1} and \eqref{pf2} hold for all $x \in (y_1, y_2)$, we apply them only at a single point $x_* \in (y_1,y_2)$. We choose $x_*$ such that we can squeeze a parabola $q$ in between $\orho$ and its tangent line $\varphi(x) := \orho(x_*) + (x - x_*) \orho'(x_*)$ at $x = x_*$; see Figure \ref{fig:orho:UB}. Then, we can bound \eqref{pf2} at $x = x_*$ from above by replacing the term in brackets by $q - \varphi$. Except for the values $\orho(x_*)$ and $\orho'(x_*)$, this eliminates all dependence on $\orho$, and the estimate in Proposition \ref{prop:orhoa:sup:bd} follows by some simple algebra. 

We continue with the proof. As illustrated in Figure \ref{fig:orho:UB}, let
\[
  m := \frac{y_2 + y_1}2
  \quad \text{and} \quad
  \ell := y_2 - y_1
\]
be respectively the midpoint and length of $\supp \orho = [y_1, y_2]$. Then, we set
\[
  \tilde q(x) := - \| \orho \|_\infty \Big( \frac{x - m}\ell \Big)^2
\]
and
\begin{equation} \label{pf4}
  \Gamma
  := \max_{[y_1,y_2]} (\orho - \tilde q)
  \geq \| \orho \|_\infty.
\end{equation}
Then,
\[
  q(x)
  := \Gamma + \tilde q(x)
  = \Gamma - \| \orho \|_\infty \Big( \frac{x - m}\ell \Big)^2
\]
satisfies
\begin{equation} \label{pf7}
  \min_{[y_1,y_2]} (q - \orho) 
  = - \max_{[y_1,y_2]} (\orho - \tilde q - \Gamma)
  = 0.
\end{equation}

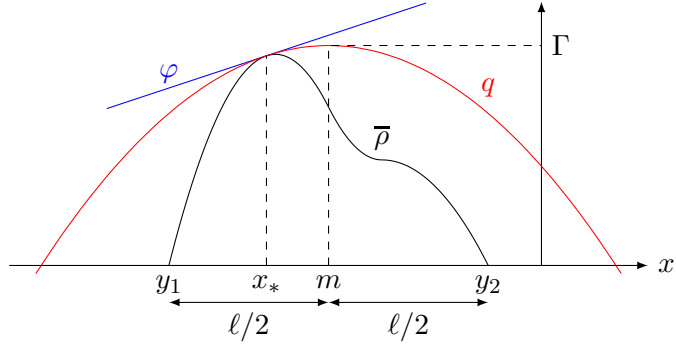
\begin{figure}[ht]
\centering
\begin{tikzpicture}[scale=.7, >= latex]
\draw[->] (7,0) --++ (0,5);
\draw[->] (-3,0) -- (9,0) node[right] {$x$};

\begin{scope}[shift={(2,4)},scale=1] 
    \draw[domain=-2:1, smooth] plot (\x,{ -\x*\x });
\end{scope}
\begin{scope}[shift={(4,2)},scale=1] 
    \draw[domain=-1:0, smooth] plot (\x,{ \x*\x });
    \draw[domain=0:2, smooth] plot (\x,{ -\x*\x/2 });
\end{scope}

\begin{scope}[shift={(11/6, 143/36)},scale=1] 
    \draw[domain=-3:3, smooth, blue] plot (\x,{ \x/3 });
\end{scope}

\begin{scope}[shift={(3, 25/6)},scale=1] 
    \draw[domain=-5.5:5.5, smooth, red] plot (\x,{ -\x*\x/7 });
\end{scope}

\draw (0,0) node[below] {$y_1$};
\draw (6,0) node[below] {$y_2$};
\draw[dashed] (11/6,0) node[below] {$x_*$} --++ (0,143/36);
\draw[blue] (0,4) node[below]{$\varphi$};
\draw (4,2) node[above] {$\orho$};
\draw[red] (6,3) node[above] {$q$};
\draw[dashed] (3,0) node[below] {$m$} --++ (0,25/6) --++ (4,0) node[right] {$\Gamma$};
\draw[<->] (0,-.7) --++ (3,0) node[midway, below] {$\ell/2$};
\draw[<->] (3,-.7) --++ (3,0) node[midway, below] {$\ell/2$};
\end{tikzpicture} \\
\caption{Sketch of the functions $\orho$, $\varphi$, $q$ with corresponding constants $y_1$, $y_2$, $m$, $\ell$, $x_*$, $\Gamma$.}
\label{fig:orho:UB}
\end{figure}

Let $x_*$ be a minimizer of \eqref{pf7}, i.e.\ 
\begin{equation} \label{pf8}
  q(x_*) = \orho (x_*).
\end{equation}
Since for $x \in \{y_1,y_2\}$ we have 
\[
  (q - \orho)(x) 
  = \Gamma - \Big( \frac{x-m}\ell \Big)^2 \| \orho \|_\infty
  = \Gamma - \frac14 \| \orho \|_\infty
  \geq \frac34 \| \orho \|_\infty > 0,
\]
if follows from \eqref{pf7} that $x_* \in (y_1,y_2)$. Hence, by minimality of $x_*$,
\begin{equation} \label{pf9}
  q'(x_*) = \orho' (x_*).
\end{equation}
Finally, since $\orho(x) = 0$ for $x \notin (y_1,y_2)$, \eqref{pf7} implies
\begin{equation} \label{pf6}
  \min_{ \{ q \geq 0 \} } (q - \orho) 
  = 0.
\end{equation}
In conclusion, from \eqref{pf8}, \eqref{pf9} and \eqref{pf6} we obtain the following upper bound for the term inside brackets in \eqref{pf2} at $x = x_*$:
\begin{multline} \label{pf3}
  q(x_* + z) \geq 0
  \implies \\ 
  \oR_*(z) 
  := \orho(x_* + z) - \orho(x_*) - z \orho'(x_*) 
  \leq q(x_* + z) - q(x_*) - z q'(x_*)
  = - \| \orho \|_\infty \frac{z^2}{\ell^2}.
\end{multline}

Next we construct a sufficient condition for $z \in \R$ to satisfy the condition $q(x_* + z) \geq 0$. Recalling \eqref{pf4}, we observe that
\[
  r(x)
  := \| \orho \|_\infty \Big( 1 - \Big( \frac{x - m}\ell \Big)^2 \Big)
  \leq q(x).
\]
Since the zeros of the parabola $r$ are $m \pm \ell$, we have that $r > 0$ on $B(m, \ell)$, and thus 
\[
  B \Big( 0, \frac\ell2 \Big) \subset B(m - x_*, \ell) \subset \{ z \mid q(x_* + z) \geq 0 \}.
\]
Then, we obtain from \eqref{pf3} that 
\begin{align*} 
  \oR_*(z)
  \leq - \| \orho \|_\infty \frac{z^2}{\ell^2}
  \qquad \text{for all } |z| \leq \frac\ell2.
\end{align*}

For $|z| \geq \frac\ell2$, we observe that $\oR_*$ is either negative or affine with slope bounded in absolute value by
\[
  |\orho'(x_*)|
  = |q'(x_*)|
  = 2 \| \orho \|_\infty \frac{|x_* - m|}{\ell^2}
  \leq \frac1\ell \| \orho \|_\infty.
\]
Hence,
\begin{align*}
  \oR_*(z) &\leq \frac1\ell \| \orho \|_\infty \Big( |z| - \frac\ell2 \Big) 
  && \text{for all } |z| \geq \frac\ell2.
\end{align*}
Finally, collecting the upper bounds on $\oR_*(z)$ above, we continue the estimate in \eqref{pf2} for $x = x_*$ by
\begin{align} \notag
  (K_\alpha * \orho)''(x_*)
  &= \int_{\R} \oR_*(z) K_\alpha''(z) \, dz \\\label{pf5}
  &\leq \frac1{\ell^2} \| \orho \|_\infty \bigg( - \int_{- \ell/2}^{\ell/2} z^2 K_\alpha''(z) \, dz 
    + \ell \int_{B(0, \ell/2)^c} \Big( |z| - \frac\ell2 \Big) K_\alpha''(z) \, dz \bigg).
\end{align} 

To estimate \eqref{pf5}, note that the integrands in both integrals are nonnegative. We bound each integral separately. By Lemma \ref{l:orhoa:supp}, $\ell \geq c > 0$ uniformly in $\alpha$. 
Then, for $\alpha \geq 2$ we obtain
\[
  \int_{- \ell/2}^{\ell/2} z^2 K_\alpha''(z) \, dz
  = \int_{- \alpha \ell/2}^{ \alpha \ell/2} y^2 K''(y) \, dy
  \geq \int_{-c}^c y^2 K''(y) \, dy = c'
  > 0.
\]
For the second integral in \eqref{pf5}, we integrate by parts to obtain
\begin{multline*}
  \int_{B(0, \ell/2)^c} \Big( |z| - \frac\ell2 \Big) K_\alpha''(z) \, dz
  = 2 \int_{\ell/2}^\infty \Big( z - \frac\ell2 \Big) K_\alpha''(z) \, dz
  = 2 \int_0^\infty \zeta K_\alpha''\Big( \zeta + \frac\ell2 \Big) \, d\zeta
  = 2 K_\alpha \Big( \frac\ell2 \Big) \\
  \leq 2 K_\alpha \Big( \frac{c}2 \Big)
  = 2 \alpha K \Big( \frac{c}2 \alpha \Big)
  \leq C \alpha e^{-c \alpha/2}
  \leq C' e^{- c' \alpha}.
\end{multline*}
Then, for all $\alpha$ large enough, we obtain from \eqref{pf1}, \eqref{pf5} and $c \leq \ell \leq C$ (see Lemma \ref{l:orhoa:supp}) that
\[
  -\| Q_\alpha'' \|_\infty
  \leq (K_\alpha * \orho)''(x_*)
  \leq \frac1{\ell^2} \| \orho \|_\infty \Big( - c +  C \ell e^{-c' \alpha} \Big)
  \leq - \frac c2 \| \orho \|_\infty.
\]
\end{proof}

\subsection{Upper bound on the energy difference}
\label{s:pf:t:E:UB} 

Thanks to Lemma \ref{l:orhoa:supp} and Proposition \ref{prop:orhoa:sup:bd} we have obtained enough control on the dependence of $\orho, y_1^\alpha, y_2^\alpha$ on $\alpha$. We therefore simplify notation by setting $y_i = y_i^\alpha$ ($i=1,2$) in this and the following sections.

In this section we prove the following of the four inequalities in Theorem \ref{t}:
\begin{align} \label{pf:zp}
  E_n^\alpha (\obx) - E^\alpha (\orho) 
   \leq C \frac{\alpha}n \log \frac{ q_\alpha n }\alpha,
\end{align}
where 
\begin{align} \label{qal} 
  q_\alpha := \| Q_\alpha'' \|_{L^\infty(\supp \orho)}  + 1.  
\end{align}
The proof below is inspired by \cite[Section 4]{KimuraVanMeurs21}. 
 
First, we introduce a small modification of $E_n^\alpha$. Let $\hat E_n^\alpha : \R^{n+1} \to [0,\infty)$ be defined by
\[
  \hat E_n^\alpha (\hat \bx) := \frac1{2 n^2} \sum_{i =0}^n \sum_{ \substack{ j = 0 \\ j \neq i }}^n K_\alpha (\hat x_i - \hat x_j)  + \frac1n \sum_{i=0}^n Q_\alpha (  \hat x_i ).
\]
We will construct $\hat \bx \in \R^{n+1}$ such that
\begin{align} \label{pf:zs}
  \hat E_n^\alpha (\hat \bx) - E^\alpha (\orho)
  \leq C \frac \alpha n \log \frac{ q_\alpha n }\alpha.
\end{align}
By the minimality of $\obx$ it is easy to see that
\[
  E_n^\alpha (\obx) 
  \leq \hat E_n^\alpha (\hat \bx) + \frac1{n-1} \hat E_n^\alpha (\hat \bx).
\]
Then, \eqref{pf:zp} follows by applying \eqref{pf:zs} to the second term in the right-hand side and by recalling from Lemma \ref{l:Ea:bds} that $E^\alpha (\orho) \leq C$.
\smallskip

To prove \eqref{pf:zs}, we take $\hat \bx$ such that
\[
  \hat x_0 = y_1,
  \qquad \hat x_n = y_2,
  \qquad \int_{\hat x_0}^{\hat x_i} \orho = \frac in \quad \text{for all } 1 \leq i \leq n-1.
\]
It is easy to see that this defines a unique $\hat \bx \in \R^{n+1}$, and that
\begin{align} \label{pf:zr}
  \int_{\hat x_{i-1}}^{\hat x_i} \orho = \frac1n
  \quad \text{for all } 1 \leq i \leq n.
\end{align}
Related to $\hat \bx$, we set for $i = 1,\ldots, n$
\begin{align*}
  I_i &:= [\hat x_{i-1}, \hat x_i], \\
  \ell_i &:= \hat x_i - \hat x_{i-1} = |I_i|, \\
  \varphi_i &:= \frac1{n \ell_i } \chi_{I_i}, \\
  \varphi &:= \sum_{j=1}^n \varphi_j.
\end{align*}
We interpret the step function $\varphi$ as a particle density function constructed from $\hat \bx$. By construction, it satisfies $\supp \varphi = \supp \orho$ and $\int_{I_i} \varphi = \frac1n$ for all $1 \leq i \leq n$. By Proposition \ref{prop:orhoa:sup:bd} we obtain that
\begin{equation} \label{pf:zj}
  \ell_i 
  \geq \frac1{n \| \orho \|_\infty }  
  \geq \frac c{q_\alpha n}
  \quad \text{for all } 1 \leq i \leq n
\end{equation}
for some $c > 0$ independent of $i,n,\alpha$.

Next we define the `diagonal contribution' to the energies $\hat E_n^\alpha $ and $E^\alpha$. Let
\[
  D_n(\hat \bx) 
  := \frac1{n^2} \sum_{i =1}^n K_\alpha (\hat x_i - \hat x_{i-1})
  = \frac1{n^2} \sum_{i =1}^n K_\alpha (\ell_i)
\]
be the contribution of the neighboring particles to the interaction part of the energy $\hat E_n^\alpha (\hat \bx)$, and let
\begin{equation} \label{pf:zu} 
  D(\varphi)
  := \frac12 \sum_{i=1}^n \int_\R (K_\alpha * \varphi_i) \varphi_i
  = \frac1{2 n^2}  \sum_{i=1}^n \frac1{\ell_i^2} \iint_{(0,\ell_i)^2} K_\alpha (x-y) \, dydx.
\end{equation}
be a certain diagonal part of $E^\alpha (\varphi)$. Since $K$ is decreasing on $(0,\infty)$, we observe that 
\[
  D_n(\hat \bx) \leq 2 D(\varphi).
\]

Next we show that
\begin{equation} \label{pf:zt}
  D(\varphi) \leq C \frac \alpha n \log \frac{ q_\alpha n }\alpha.
\end{equation}
We start with estimating the integral in the right-hand side of \eqref{pf:zu}. Rotating the integration domain $(0,\ell_i)^2$ by $45$ degrees, and covering the resulting diamond-shaped domain with a square of size $\sqrt 2 \ell_i$, we estimate
\begin{equation*}
  \iint_{(0,\ell_i)^2} K_\alpha (x-y) \, dydx
  \leq 2 \sqrt 2 \ell_i \int_0^{\ell_i/\sqrt 2} K_\alpha (\xi) \, d\xi
  = 2 \sqrt 2 \ell_i \int_0^{\alpha \ell_i/\sqrt 2} K (\xi) \, d\xi.
\end{equation*}
Then, using the estimate \eqref{pf:zl} 
we obtain for $\ell_i \leq 1/(\sqrt 2 \alpha)$ that
\[
  \int_0^{\alpha \ell_i/\sqrt 2} K (\xi) \, d\xi
  \leq C_1 \int_0^{\alpha \ell_i/\sqrt 2} - \log ( \xi) \, d\xi
  \leq C \alpha \ell_i \log \frac1{ \alpha \ell_i }
\]
and for $\ell_i > 1/(\sqrt 2 \alpha)$ that
\[
  \int_0^{\alpha \ell_i/\sqrt 2} K (\xi) \, d\xi
  \leq \int_0^{\infty} K (\xi) \, d\xi
  \leq C < C \sqrt 2 \alpha \ell_i.
\]
Applying these estimates to \eqref{pf:zu}, we obtain
\[
  D(\varphi)
  \leq \frac C{n^2}  \sum_{i=1}^n \alpha \log \Big( \max \Big\{ \frac1{ \alpha \ell_i }, e \Big\} \Big).
\]
Recalling \eqref{pf:zj} and using that $\alpha \leq n/2$ and $q_\alpha \geq 1$,  \eqref{pf:zt} follows. \comm{This is a bit annoying. The only interest is the upper bound on $1/(\alpha \ell_i)$. However, since it is in the logarithm, we technically also have to make sure that it is bounded from below by $2$ }

With these preparations in hand we prove \eqref{pf:zs}. We start with the confinement part, and show that 
\begin{equation} \label{pf:zq}
  \frac1n \sum_{i=0}^n Q_\alpha (  \hat x_i ) - \sum_{i=1}^n \int_{I_i} Q_\alpha \, d\orho 
  \leq \frac Cn.
\end{equation}
By Lemma \ref{l:orhoa:supp}, $0 \in [y_1, y_2]$, and thus there exists a $j$ such that $0 \in I_j$. Recalling \eqref{pf:zr} and the monotonicity of $Q$ on either side of $0$, we estimate
\[
  \int_{I_i} Q_\alpha \, d\orho
  \geq \frac1n \left\{ \begin{array}{ll}
    Q_\alpha(\hat x_i)
    & \text{if } i < j \\
    0
    & \text{if } i = j \\
    Q_\alpha(\hat x_{i-1})
    & \text{if } i > j. \\
  \end{array} \right.
\]
Then,
\[
  \frac1n \sum_{i=0}^n Q_\alpha (  \hat x_i ) - \sum_{i=1}^n \int_{I_i} Q_\alpha \, d\orho 
  \leq \frac{Q_\alpha (  \hat x_0 ) + Q_\alpha (  \hat x_n )}n
  = \frac{Q_\alpha ( y_1 ) + Q_\alpha ( y_2 )}n,
\]
and \eqref{pf:zq} follows from \eqref{l:orhoa:supp:Qn:UB}.

It is left to prove \eqref{pf:zs} for the interaction part. With this aim, we set $\hEnint$ and $\Eint$ as the interaction parts of respectively $E_n^\alpha$ and $E^\alpha$. Then, the proofs of \cite[Lemma 4.3]{KimuraVanMeurs21} and \cite[Lemma 4.5]{KimuraVanMeurs21} demonstrate that, respectively,
\begin{align*}
  \Eint(\varphi) - \Eint(\orho) &\leq 3 D(\varphi) + 4 D_n(\hat \bx) \quad \text{and} \\
  \hEnint (\hat \bx) - \Eint(\varphi) &\leq D_n(\hat \bx)  
\end{align*}
hold.
Recalling $D_n(\hat \bx) \leq 2 D(\varphi)$ and \eqref{pf:zt}, the desired estimate in \eqref{pf:zs} follows by adding the two estimates in the display above. 

\subsection{Lower bound on the energy difference}
\label{s:pf:t:E:LB}

In this section we prove the following of the four inequalities in Theorem \ref{t}:
\begin{align} \label{pf:zo}
  E_n^\alpha (\obx) - E^\alpha (\orho) 
   \geq -C \frac{\alpha}n \log \frac{ q_\alpha n }\alpha,
\end{align}
where $q_\alpha$ is defined in \eqref{qal}.

The proof below is inspired by the proof of \cite[Proposition 5.1]{KimuraVanMeurs21}. 
Related to $\obx$ and $\orho$, we define the measures
\begin{equation*} 
  \mu_n := \frac1n \sum_{i=1}^n \delta_{\ox_i}
  \quad \text{and} \quad
  \nu_n := \mu_n - \orho.
\end{equation*}
Let
$
  \Delta := \{ (x, x) : x \in \R \} \subset \R^2
$
be the diagonal in $\R^2$, and note that 
\begin{equation*} 
  E_n^\alpha(\obx) 
  = \frac12 \iint_{\Delta^c} K_\alpha(x-y) \, d(\mu_n \otimes \mu_n)(x,y) + \int_\R Q_\alpha \, d\mu_n.
\end{equation*}
Then, 
\begin{align} \label{pf:zy}
  E_n^\alpha(\obx) - E^\alpha ( \orho ) = \frac12 \iint_{\Delta^c} K_\alpha (x-y) \, d(\nu_n \otimes \nu_n)(x,y)
    + \int_\R (K_\alpha * \orho) d\nu_n
    + \int_\R Q_\alpha \, d\nu_n.
\end{align}
From the Euler-Lagrange equation \eqref{EL:ineq:on:R} it follows that the contribution of the last two integrals is nonnegative.  

Next, let $L$ and $M$ be as defined in \eqref{KLM} for some $\sigma \in (0,\frac12]$ which we choose later. Analogously to $K_\alpha$ in \eqref{Kal}, we define $L_\alpha$ and $M_\alpha$. Then, we obtain for the first integral in the right-hand side of \eqref{pf:zy} that
\begin{multline*}
   \iint_{\Delta^c} K_\alpha (x-y) \, d(\nu_n \otimes \nu_n)(x,y) \\
   = \iint_{\Delta^c} M_\alpha (x-y) \, d(\nu_n \otimes \nu_n)(x,y)
     + \iint_{\R^2} L_\alpha (x-y) \, d(\nu_n \otimes \nu_n)(x,y)
     - \frac1n L_\alpha(0).
\end{multline*} 
For the first integral, we expand $\nu_n = \mu_n - \orho$ and obtain
\begin{multline*}
  \iint_{\Delta^c} M_\alpha (x-y) \, d(\nu_n \otimes \nu_n)(x,y) \\
  = \frac1{n^2} \sum_{i \neq j} M_\alpha (\ox_i - \ox_j)
    - 2 \int_\R (M_\alpha * \orho) \, d \mu_n
    + \iint_{\R^2} M_\alpha (x-y) \orho (y) \orho(x) \, dydx.
\end{multline*}
Since $M_\alpha \geq 0$, the first and the third integral are nonnegative. For the second integral, we apply Proposition \ref{prop:orhoa:sup:bd} and Lemma \ref{l:KLM}\ref{l:KLM:Mint} to obtain
\begin{equation*} 
  \bigg| \int_\R (M_\alpha * \orho) \, d \mu_n \bigg| 
  \leq \int_\R \Big( \int_\R M_\alpha \Big) \| \orho \|_\infty \, d\mu_n
  \leq C  q_\alpha \sigma \log \frac1\sigma.
\end{equation*} 
Collecting all estimates above, we observe that
\begin{equation} \label{pf:zz}
  E_n^\alpha(\bx) - E^\alpha ( \orho ) 
  \geq \frac12 \int_\R (L_\alpha * \nu_n) \, d\nu_n - C  q_\alpha \sigma \log \frac1\sigma - \frac1n L_\alpha(0).
\end{equation}
By Lemma \ref{l:KLM}\ref{l:KLM:Lnorm}, the first term is nonnegative. 
Applying Lemma \ref{l:KLM}\ref{l:KLM:L0} to the third term, we get
\[
  \frac1n L_\alpha(0)
  \leq C \frac \alpha n \log \frac1\sigma.
\]
Then, by taking 
\[
  \sigma = \frac\alpha{q_\alpha n},
\] 
we obtain \eqref{pf:zo} from \eqref{pf:zz} (we have assumed here that $\alpha \leq \frac n2$ such that $\sigma \leq \frac12$).

\subsection{Lower bound on the potential difference}
\label{s:pf:t:F:LB}

In this section we prove the last of the four inequalities in Theorem \ref{t}:
\begin{align} \label{pf:zn}
  F_n^\alpha - F^\alpha 
   \geq -C \sqrt{ \frac{\alpha}n \log ( q_\alpha n) },
\end{align}
where $q_\alpha$ is defined in \eqref{qal}.

Similar to \eqref{pf:zy} we expand
\begin{multline} \label{pf:zv} 
  F_n^\alpha - F^\alpha = \frac12 \iint_{\Delta^c} K_\alpha (x-y) \, d\nu_n(y) d\nu_n(x) 
    + \frac12 \bigg( \int_\R (K_\alpha * \orho) d\nu_n
    + \int_\R Q_\alpha \, d\nu_n \bigg) \\
    + \frac12 \int_\R (K_\alpha * \orho) d\nu_n.
\end{multline}
Applying the argument which follows \eqref{pf:zy}, we bound the first two terms from below by $- C' \frac\alpha n \log \frac{ q_\alpha n }\alpha$. We expand the integral in the third term as
\begin{align} \label{pf:zx} 
  \int_\R (K_\alpha * \orho) d\nu_n
  = \int_\R (L_\alpha * \orho) d\nu_n
    + \int_\R (M_\alpha * \orho) d\nu_n,
\end{align}
where we take again $\sigma = \alpha / (q_\alpha n)$ in the definition of $L$ and $M$.
We bound the second term in the right-hand side of \eqref{pf:zx} as
\[
  \bigg| \int_\R (M_\alpha * \orho) d\nu_n \bigg| 
  \leq \| \orho \|_\infty \bigg( \int_\R M_\alpha \bigg) \int_\R d |\nu_n|
  \leq C \frac{\alpha}n \log ( q_\alpha n). 
\] 
For the first term in the right-hand side of \eqref{pf:zx}, we apply Lemma \ref{l:KLM}\ref{l:KLM:Lnorm} to obtain
\[
  \bigg|  \int_\R (L_\alpha * \orho) d\nu_n \bigg| 
  = \big| (\orho, \nu_n)_{L_\alpha} \big|
  \leq \| \orho \|_{L_\alpha} \| \nu_n \|_{L_\alpha}.
\]
To bound $\| \nu_n \|_{L_\alpha}$, we recall \eqref{pf:zz} and apply \eqref{pf:zp} to obtain
\begin{align*} 
  \frac12 \| \nu_n \|_{L_\alpha}^2
  = \frac12 \int_\R (L_\alpha * \nu_n) \, d\nu_n
  \leq E_n^\alpha(\obx) - E^\alpha (\orho) + C \frac{\alpha }n \log \frac{ q_\alpha n }\alpha
  \leq C' \frac{\alpha}n \log \frac{ q_\alpha n }\alpha.
\end{align*}
In conclusion
\[
  \bigg|  \int_\R (L_\alpha * \orho) d\nu_n \bigg| 
  \leq \| \orho \|_{L_\alpha} \| \nu_n \|_{L_\alpha}
  \leq C' \sqrt{ \frac{\alpha}n \log \frac{ q_\alpha n }\alpha } \| \orho \|_{K_\alpha}.
\]
Since $\| \orho \|_{K_\alpha} \leq \sqrt{2 E^\alpha(\orho) }$ is uniformly bounded (see Lemma \ref{l:Ea:bds}), it can be absorbed in the constant $C'$. Finally, applying the obtained estimates to \eqref{pf:zv}, the desired estimate \eqref{pf:zn} follows. This completes the proof of Theorem \ref{t}.

\section{Proof of Theorem \ref{t:intro}}
\label{s:pf:t:intro} 

In this section we prove Theorem \ref{t:intro} by showing that it is a corollary of Theorem \ref{t}. Given $n, \beta, \alpha$ as in Theorem \ref{t:intro}, it follows from $\frac n{P(n)} \leq \beta \leq \Gamma n$ that
\[
  \alpha = P^{-1} \Big( \frac n\beta \Big) \in ( c, n], \qquad c := \max \{q_2, P^{-1}(1/\Gamma)\}.
\]
Hence, Theorem \ref{t} applies. From the definitions \eqref{E:def} and \eqref{Ena:scaling} with $\gamma$ as in \eqref{alpha} we obtain
\begin{align*}  
  I_n^D(\oba) - I_n^C(\omu) &= 2\frac{n^2}\alpha \big( E_n^\alpha(\obx) - E^\alpha (\orho)  \big) \\
  F_n^D - F_n^C &= 2\frac{n^2}\alpha \big( F_n^\alpha - F^\alpha \big).
\end{align*}
Then, applying the estimates of Theorem \ref{t} to the right-hand sides, the estimates in Theorem \ref{t:intro} follow from some elementary algebra.

\appendix

\section{Proof of Lemma \ref{l:convo:pp}}
\label{a:pf:l:convo}

Here we prove Lemma \ref{l:convo:pp}.
By changing variables if needed we may assume that $(a,b) = (0,1)$ and $\alpha = 1$.

First, we show that the integral in the right-hand side is well defined. Since $f \in L^1(\R)$ and $K''$ is bounded outside of any neighbourhood around $0$, it suffices to prove integrability of the integrand in a neighbourhood around $0$. Note that $\varphi(z) := f(x) + z f'(x)$ is the tangent line of $z \mapsto f(x+z)$ at $z=0$. Hence, the part inside the brackets is bounded in absolute value by $z \mapsto C z^2$ in a neighbourhood of $0$. Since $|K''(z)| \leq C z^{-2}$, the integrand is bounded on this neighbourhood.

Next we prove Lemma \ref{l:convo:pp} under the additional assumption that $f \in C^2(\R)$ with $f, f'$ bounded. Let
\[
  F(z) := f(x + z) - f(x) - z f'(x).
\]
Then, starting from the right-hand side in Lemma \ref{l:convo:pp},
\begin{multline} \label{pf10}
  \int_{\R} F(z) K''(z) \, dz 
  \xleftarrow{\e \to 0} \int_{\R \setminus B(0,\e)} F(z) K''(z) \, dz \\
  = \big[ F(z) K'(z) \big]_{-\infty}^{-\e}
     + \big[ F(z) K'(z) \big]_{\e}^\infty
     + \int_{\R \setminus B(0,\e)} F'(z) K'(z) \, dz.
\end{multline}
Since $F(z)$ has linear growth and $K(z)$ decays exponentially as $z \to \infty$, the boundary terms equal
\begin{align*}
  \big[ F(z) K'(z) \big]_{-\infty}^{-\e}
     + \big[ F(z) K'(z) \big]_{\e}^\infty
  &= F(-\e) K'(-\e) - F(\e) K'(\e) \\
  &= - \e^2 K'(\e) \frac{F(-\e) + F(\e)}{\e^2} \\
  &= - \e^2 K'(\e) \frac{f(x-\e) + f(x+\e) - 2f(x)}{\e^2}.
\end{align*}
Since $f \in C^2(\R)$, this term vanishes in the limit $\e \to 0$. For the remaining integral in \eqref{pf10}, we compute
\begin{align} \label{pf11}
  \int_{\R \setminus B(0,\e)} F'(z) K'(z) \, dz
  = \big[ F'(z) K(z) \big]_{-\infty}^{-\e}
     + \big[ F'(z) K(z) \big]_{\e}^\infty
     + \int_{\R \setminus B(0,\e)} F''(z) K(z) \, dz.
\end{align}
Since $F'$ is bounded, the boundary terms can be treated as above. This yields
\begin{align*}
  \big[ F'(z) K(z) \big]_{-\infty}^{-\e}
     + \big[ F'(z) K(z) \big]_{\e}^\infty
  = F'(-\e) K(-\e) - F'(\e) K(\e)
  = \e K(\e) \frac{ f(x - \e) - f(x + \e) }\e,
\end{align*}
which vanishes in the limit $\e \to 0$. For the remaining integral in \eqref{pf11} we obtain
\begin{align*}
  \int_{\R \setminus B(0,\e)} F''(z) K(z) \, dz
  &= \int_{\R \setminus B(0,\e)} f''(x + z) K(z) \, dz \\
  &= \int_{\R \setminus B(0,\e)} f''(x - y) K(y) \, dz \\
  &= (K*f'')(x)
  = (K*f)''(x).
\end{align*}
This proves Lemma \ref{l:convo:pp} under the additional assumption that $f \in C^2(\R)$ with $f, f'$ bounded.

Next we prove Lemma \ref{l:convo:pp} in the general case. For convenience, we assume $x \leq \frac12$ and set $\delta := x/3$. \comm{We use this to work with the two concentric balls $B(x, \delta)$ and $B(x, 2\delta)$. $\tilde f$ is such that it is equal to $f$ on $B(x, 2\delta)$. For $\tilde f$ to be in $C^2$ and extendable to $\R$ in a $C^2$-manner, we need $B(x, 2\delta)$ to stay a positive distance away from $0$ and $1$. In addition, later, we will regularize $K$ on $B(0, \delta)$, and in the convolution in the last formula below we then need room to shift the ball of radius $\delta$ inside $B(x, 2\delta)$. In more detail, we use that $B(\xi, \delta) \subset B(x, 2\delta)$ for all $\xi \in B(x, \delta)$. } 
Let $\tilde f$ be a $C^2$ extension of $f |_{B(x,2\delta)}$ to $\R$ such that $\tilde f \in L^1(\R)$ with $\tilde f$ and $\tilde f'$ bounded. Then, by what we have just proven,
\[
  (K * \tilde f)''(x)
  = \int_{\R} \big[ \tilde f(x + z) - \tilde f(x) - z \tilde f'(x) \big] \, K''(z) dz,
\]
and it remains to prove Lemma \ref{l:convo:pp} for
\[
  g := f - \tilde f.
\]
Note that $g \in L^1(\R)$ satisfies 
\begin{equation} \label{pf13}
  B(x,2\delta) \cap \supp g = \emptyset.
\end{equation}
Then, for any even $C^2$-extension $\tilde K$ of $K |_{\R \setminus B(0, \delta)}$ to $\R$, the right-hand side of Lemma \ref{l:convo:pp} can be rewritten as
\begin{align*}
  \int_\R g(x+z) K''(z) \, dz
  = \int_\R g(x-y) \tilde K''(y) \, dz 
  = (\tilde K'' * g)(x)
  = (\tilde K * g)''(x).
\end{align*}
Finally, to show that $(\tilde K * g)''(x) = (K * g)''(x)$, we take any $\xi \in B(x, \delta)$, and observe from \eqref{pf13} that
\[
  (\tilde K * g)(\xi)
  = \int_{B(0,\delta)^c} g(\xi-y) \tilde K(y) \, dz
  = \int_{B(0,\delta)^c} g(\xi-y) K(y) \, dz
  = (K * g)(\xi).
\]
This completes the proof of Lemma \ref{l:convo:pp}.

\section*{Acknowledgements}

PvM was supported by JSPS KAKENHI Grant Number JP20K14358.

\end{document}